\documentclass[preprint,12pt]{elsarticle}

\usepackage{amssymb}
\usepackage{amsmath}
\usepackage{amsthm}

\usepackage{url}
\usepackage{microtype}
\usepackage{caption}
\usepackage{doi}
\usepackage{extarrows}
\usepackage{esint}
\usepackage{enumitem}

\usepackage{cleveref} 

\journal{Nuclear Physics B} 

\newtheorem{theorem}{Theorem}[section]
\newtheorem{definition}[theorem]{Definition}

\newtheorem{lemma}[theorem]{Lemma}
\newtheorem{remark}[theorem]{Remark}
\newtheorem{example}[theorem]{Example}
\newtheorem{proposition}[theorem]{Proposition}

\newcommand{\norm}[1]{\left\|#1\right\|}

\newcommand{\supop}{\operatorname*{sup}}

\newcommand{\minop}{\operatorname*{min}}

\begin{document}
	
	\begin{frontmatter}
		
		
		
		\title{Some inequalities and geometric constants in $p$-normed spaces}
		
		\author[label1]{Zhiyao Fang}
		\author[label1]{Qi Liu\corref{cor1}}  
		\ead{liuq67@aqnu.edu.cn}  
		\author[label1]{Yuxin Wang}
		\author[label2]{Yongjin Li}
		\cortext[cor1]{Corresponding author.}  
		
		\affiliation[label1]{organization={School of Mathematics and Statistics, Anqing Normal University},
			addressline={}, 
			city={Anqing},
			postcode={246133}, 
			state={},
			country={P.R.China}}
		
		\affiliation[label2]{organization={Department of Mathematics, Sun Yat-sen University},
			addressline={}, 
			city={Guangzhou},
			postcode={510275}, 
			state={},
			country={P.R.China}}
		
		\begin{abstract}
			In this paper, we study some geometric constants in complete $p$-normed spaces with $0 < p \leq 1$. We introduce a new symmetric geometric constant associated with isosceles orthogonality, establish its sharp bounds, and provide an orthogonal characterization of the generalized von Neumann-Jordan constant in such spaces. We also investigate two Milman-type moduli in complete $p$-normed spaces, including their fundamental properties and sharp product inequalities. Finally, we extend the relation between the James constant and the generalized von Neumann-Jordan constant .
		\end{abstract}

		\begin{keyword}
			$p$-normed space \sep isosceles orthogonality \sep  geometric constant 
		\end{keyword}

	\end{frontmatter}
	
		
		
	
	\section{Introduction}
	
	The investigation into the geometric properties of Banach spaces represents a crucial domain within the broader discipline of functional analysis. In this theory, the geometric properties of normed spaces often play an important role, such as uniform convexity, uniform non-squareness, etc. Quantifying the geometric constants of the geometric features of normed spaces is very useful for studying geometric properties. For instance, Clarkson \cite{Clarkson1937} introduced the notion of the modulus of convexity to describe uniformly convex spaces, and the von Neumann-Jordan constant to quantify how much a Banach space deviates from a Hilbert space \cite{Clarkson1937}, which is defined as
	\[C_{\rm NJ}(X) = \sup\left\{ \frac{\norm{x+y}^2 + \norm{x-y}^2}{2\left( \norm{x}^2 + \norm{y}^2 \right)} : x,y\in X, (x,y)\neq(0,0) \right\}.\]
	It is well known that $1\leq C_{\rm NJ}(X)\leq2$ for any Banach space $X$, and $C_{\rm NJ}(X)=1$ if and only if $X$ is a Hilbert space \cite{Clarkson1937}. Moreover, a Banach space $X$ is uniformly non-square if and only if $C_{\rm NJ}(X)<2$ \cite{Kato2001}.

	To provide an equivalent characterization of the von Neumann-Jordan constant from the perspective of unit sphere vectors, Yang and Wang \cite{Yang2006} introduced the function $\gamma_X(t)$. For any $t\in[0,1]$, it is defined by
	\[\gamma_X(t) = \sup\left\{ \frac{\norm{x+ty}^2 + \norm{x-ty}^2}{2} : x,y\in S_X \right\}.\]
	This function has many excellent properties: it is non-decreasing, convex and continuous on $[0,1]$, and $X$ is a Hilbert space if and only if $\gamma_X(t)=1+t^2$ for all $t\in[0,1]$ \cite{Yang2006}. Moreover, the von Neumann-Jordan constant can be expressed as $C_{\rm NJ}(X) = \displaystyle\sup_{t\in[0,1]} \frac{\gamma_X(t)}{1+t^2}$ \cite{Yang2006}.

	As a fundamental tool to describe the smoothness of Banach spaces, the modulus of smoothness was also introduced in the classical literature. \cite{Lindenstrauss1963}It is defined by the function
	\[\rho_X(t) = \sup\left\{ \frac{\norm{x+ty} + \norm{x-ty}}{2} - 1 : x,y\in S_X \right\}, \quad t\in[0,+\infty).\]
	A Banach space $X$ is said to be uniformly smooth if $\displaystyle\lim_{t \to 0^+} \frac{\rho_X(t)}{t} = 0$.

	In Euclidean geometry, the concept of orthogonality is indispensable. On one hand, it is manifested in the fourth axiom of Euclidean geometry, and on the other hand, it plays a crucial role in the Pythagorean theorem. However, Banach space geometry is significantly different from Euclidean geometry because there is no unique concept of orthogonality in Banach space geometry. With the development of Banach space geometry, many different orthogonalities have been introduced into general normed linear spaces. For instance, James \cite{James1945} introduced isosceles orthogonality ($\perp_I$) and Pythagorean orthogonality ($\perp_P$):
	\[
	x \perp_I y \ \text{if and only if} \ \|x + y\| = \|x - y\|,
	\]
	and
	\[
	x \perp_P y \ \text{if and only if} \ \|x - y\|^2 = \|x\|^2 + \|y\|^2.
	\]

	In addition to the above orthogonalities, Singer orthogonality \cite{Singer1957} is another widely studied orthogonality type in Banach spaces. For two vectors $x$ and $y$ in a Banach space $X$, $x$ is said to be Singer orthogonal to $y$, denoted by $x \perp_S y$, if $\|x\|\|y\| = 0$ or
	\[
	\left\| \frac{x}{\|x\|} + \frac{y}{\|y\|} \right\| = \left\| \frac{x}{\|x\|} - \frac{y}{\|y\|} \right\|.
	\]
	To quantify the difference between Singer orthogonality and isosceles orthogonality, a geometric constant was introduced in \cite{Bi2026} as follows:
	\[
	SI(X)= \sup \left\{ \frac{\|x+y\|}{\|x-y\|} : x,y \in X, x \perp_S y, (x,y) \neq (0,0) \right\},
	\]
	where $x \perp_S y$ denotes that $x$ is Singer orthogonal to $y$. This constant provides an effective quantitative tool for measuring the deviation between different orthogonalities.
	
	To characterize the uniform non-squareness and normal structure of Banach spaces, Gao and Lau \cite{Gao1990} introduced the James constant as follows:
	\[
	J(X) = \sup\left\{ \min\left\{ \norm{x+y}, \norm{x-y} \right\} : x,y\in S_X \right\}.
	\]
	Correspondingly, Kato et al. \cite{Kato2001} defined the Schäffer constant which is closely related to the James constant:
	\[
	S(X) = \inf\left\{ \max\left\{ \norm{x+y}, \norm{x-y} \right\} : x,y\in S_X \right\}.
	\]
	Xiao and Zhu \cite{Xiao2025} extended both constants to complete $p$-normed spaces($0<p\leq 1$) as
	\[	
	J_p(X) = \sup\left\{ \min\left\{ \norm{x+y}_p, \norm{x-y}_p \right\} : x,y\in S_X \right\},
	\]
	\[	
	S_p(X) = \inf\left\{ \max\left\{ \norm{x+y}_p, \norm{x-y}_p \right\} : x,y\in S_X \right\},
	\]
	and obtained the sharp bounds $2^{p-1} \leq J_p(X) \leq 2$ and $2^{p-1} \leq S_p(X) \leq 2^p$.

	As a tool to describe the geometric properties of complete $p$-normed spaces($0<p\leq 1$), the modulus of $p$-rotundity was introduced in \cite{XiaoYuan2025(2)}. For a complete $p$-normed space $(X,\|\cdot\|_{p})$ with $0<p\leq 1$ , it is defined for $\varepsilon\in(0,2]$ as
	\[
	\begin{aligned}
		\Delta_{X}^{p}(\varepsilon)
		&=\inf\left\{1-\frac{\|x+y\|_{p}^{\frac{1}{p}}}{2^{\frac{1}{p}}} : x,y\in S_{X},\ \|x-y\|_{p}\geq 2^{1-p}\varepsilon^{p}\right\}\\
		&=\inf\left\{1-\frac{\|x+y\|_{p}^{\frac{1}{p}}}{2^{\frac{1}{p}}} : x,y\in S_{X},\ \|x-y\|_{p}=2^{1-p}\varepsilon^{p}\right\}.
	\end{aligned}
	\]

	Later, in order to investigate the distance between isosceles orthogonality and Pythagorean orthogonality, Yang et al. \cite{Yang2022} defined a new orthogonal geometric constant, as follows:
	\[
	\Omega_X(a) = \sup\left\{ \frac{\|ax + y\|^2 + \|x + ay\|^2}{\|x + y\|^2} : x, y \in X, (x,y) \neq (0,0), x \perp_I y \right\}, \ 
	\]\text{where} $\ 0 \leq a < 1. $\\
	They first gave the upper and lower bounds of the constant $\Omega_X(a)$, that is, $1 + a^2$ and $2$, and then established the identity of $\Omega_X(a) = \frac{1+a^2}{2}\gamma_X(\frac{1-a}{1+a})$. Finally, building on this foundational identity, they delineated the relationship between the constant $\Omega_X(a)$ and the intrinsic geometric properties of Banach spaces. This exploration encompasses uniformly non-square, uniformly smooth, uniformly convex, and normal structure.
	
	Most recently, Ni et al. \cite{Ni2025} introduced a symmetric geometric constant based on isosceles orthogonality in Banach spaces, which is the direct prototype of our work. For any $t\in[0,\frac{1}{2})$, it is defined by
	\[
	L_X(t) = \sup\!\left\{
	\frac{\|tx + (1-t)y\|^2 + \|(1-t)x + ty\|^2}{\|x+y\|^2}
	\colon
	\begin{aligned}
		&x,y\in X,\\
		&(x,y)\neq(0,0),\\
		&x\perp_I y
	\end{aligned}
	\right\}.
	\]
	Ni et al. \cite{Ni2025} established the sharp bounds of $L_X(t)$, revealed its intrinsic relationship with the function $\gamma_X(t)$ and the von Neumann-Jordan constant, and used it to characterize the geometric properties such as uniform non-squareness and uniform smoothness of Banach spaces.

	Meanwhile, the equivalence between Rademacher type and Enflo type \cite{Ivanisvili2020} characterizes fundamental metric and linear structural properties of normed spaces. Subsequently, the equivalent form of the constant  with respect to isosceles orthogonality bears certain similarities to the equivalence relation between Rademacher type and Enflo type.

	Motivated by these constants, we define a new symmetric geometric constant $L_p(t,X)$ in complete $p$-normed spaces($0<p\leq 1$), offering an equivalent characterization of the generalized von Neumann-Jordan constant from an orthogonal perspective, as follows:
	\[
	L_p(t,X) = \sup\!\left\{ 
	\frac{\|tx + (1-t)y\|_p^2 + \|(1-t)x + ty\|_p^2}{\|x + y\|_p^2}
	\colon
	\begin{aligned}
		&x, y \in X,\\
		&(x,y) \neq (0,0), \\
		&x \perp_I y\\
		&t \in [0, \tfrac{1}{2})
	\end{aligned}
	\right\}.
	\]
	
	Furthermore, we introduce two Milman-type moduli $J_p(t,X)$ and $S_p(t,X)$ in complete $p$-normed spaces ($0<p\leq 1$), and extend the classical relation between the James constant and the von Neumann-Jordan constant to the parameterized case with corresponding bilateral inequalities. As a natural generalization of Banach spaces, complete $p$-normed spaces have wide applications in analysis, yet their geometric constant theory is underdeveloped, especially the systematic research on isosceles orthogonal constants, which is the gap addressed in this paper. The paper is structured as follows: Section 2 recalls basic concepts including isosceles orthogonality and related geometric constants; Section 3 studies the new symmetric geometric constant, establishes its sharp bounds and equivalent characterizations, reveals its connection with the auxiliary function, and obtains the equivalent characterization of the generalized von Neumann-Jordan constant, with all results reducing to the classical Banach space case when $p=1$; Section 4 investigates the two Milman-type moduli, their properties, sharp bounds, product inequalities, and the extended parameterized relation between the James and von Neumann-Jordan constants.
	
	\section{Preliminaries}
	In this section, we recall some fundamental definitions, notations and basic properties of complete $p$-normed spaces ($0<p\leq 1$), isosceles orthogonality, and classical geometric constants in Banach spaces, which will be used throughout the paper. The theory of complete $p$-normed spaces has been extensively studied and plays an important role in functional analysis (see, e.g., \cite{Bayoumi2003,Huang2017,Albiac2016,Gordon1991,Albiac2010}), while the theory of isosceles orthogonality and related geometric constants can be found in \cite{James1945,Yang2006,Ni2025}.
	
	Throughout this paper, unless otherwise specified, we always denote by $X$ a finite-dimensional real complete $p$-normed space with $0<p\leq1$, whose $p$-norm satisfies the following axioms. We also consistently use $\mathbb{R}$ to denote the set of all real numbers, and assume that the dimension of $X$ satisfies $2\leq \dim X < \infty$ if not otherwise stated.
	
	\begin{definition}\label{def:p-norm}\cite{Bayoumi2003}
		A $p$-norm on a vector space $X$ over $\mathbb{K}$$($scalar field$)$ is a mapping $\|\cdot\|$ from $X$ to $\mathbb{R}_+$ satisfying:
		\begin{enumerate}
			\item[(i)] $\|x\| = 0$ if and only if $x=0$;
			\item[(ii)] $\|\lambda x\| = |\lambda|^p \|x\|$, for every $\lambda \in \mathbb{K}$, $x \in X$;
			\item[(iii)] $\|x + y\| \le \|x\| + \|y\|$, for every $x, y \in X$.
		\end{enumerate}
		A vector space $X$ endowed with a $p$-norm is called a complete $p$-normed space$(0<p\leq 1)$. In this article, the specific $p$-norm on $X$ is denoted by $\norm{\cdot}_p$, to distinguish it from a generic norm. The resulting complete $p$-normed space$(0<p\leq 1) $is denoted by $(X, \norm{\cdot}_p)$.
	\end{definition}

	For a complete $p$-normed space $X$($0<p\leq 1$), we denote the closed unit ball and the unit sphere of $X$ by
	\[
	B_X = \{x\in X: \norm{x}_p \leq 1\}, \quad S_X = \{x\in X: \norm{x}_p = 1\},
	\]

	The study of orthogonality in complete $p$-normed spaces($0<p\leq 1$) represents an interesting and worthwhile research direction.
	
	\begin{definition}\cite{Xiao2025}
		Let $X$ be a complete $p$-normed space$(0<p\leq 1)$. Two elements $x,y\in X$ are said to be isosceles orthogonal, denoted by $x \perp_I y$, if and only if
		\[
		\|x+y\|_p = \|x-y\|_p.\]
	\end{definition}

	Isosceles orthogonality possesses several important properties, in particular, for any $x\in S_X$, there exists $y\in S_X$ such that $x \perp_I y$.

	\begin{definition}\label{def:L-p}
		Let $X$ be a complete $p$-normed space$(0<p\leq 1)$. For any $t \in [0, \frac{1}{2})$, the symmetric geometric constant associated with isosceles orthogonality is defined by
		\begin{equation}\label{eq:def-L}
			L_p(t,X) = \sup\!\left\{
			\frac{\|t x + (1-t)y\|_p^2 + \|(1-t)x + t y\|_p^2}{\|x+y\|_p^2}
			\colon
			\begin{aligned}
				&x, y \in X,\\
				&(x,y) \neq (0,0),\\
				&x \perp_I y
			\end{aligned}
			\right\}.
		\end{equation}
		
		Equivalently, the symmetric geometric constant can be rewritten as follows:
		\begin{equation}\label{eq:def-L-unified}
			\begin{aligned}
				L_p(t,X)
				&= \sup\left\{
				\frac{2\left(\| t x+(1-t) y\|_p^2 + \| (1-t) x+t y\|_p^2\right)}{\| x+y\|_p^2 + \| x-y\|_p^2}
				\colon
				\begin{array}{l}
					x, y \in X,\\
					(x,y) \neq (0,0),\\
					x \perp_I y,\\
					t \in [0, \tfrac{1}{2})
				\end{array}
				\right\}.
			\end{aligned}
		\end{equation}
		where the supremum is taken over all $x, y \in X$ with $(x,y) \neq (0,0)$ and $x \perp_I y$.
	\end{definition}

	Motivated by the function $\gamma_X(t)$ in Banach spaces, we introduce its generalized form in complete $p$-normed spaces($0<p\leq 1$) as follows.
	\begin{definition}\label{def:gamma-p}
		Let $X$ be a complete $p$-normed space$(0<p\leq 1)$. For any $t\in[0,1]$, the function $\gamma_p(t,X)$ is defined by
		\begin{equation}\label{eq:def-gamma-p}
			\gamma_p(t,X) = \supop\left\{ \frac{\norm{x+ty}_p^2 + \norm{x-ty}_p^2}{2} : x,y\in S_X \right\}.
		\end{equation}
	\end{definition}
	
	\begin{remark}
		When $p=1$, the above definition reduces to the function $\gamma_X(t)$ in classical Banach spaces, which is consistent with the limiting case of our subsequent results.
	\end{remark}

	\begin{definition}\label{def:CNJp}
		The  Jordan--von Neumann constant of a $p$-normed space $X$$(0<p\leq 1)$ is defined by
		\begin{equation}\label{eq:def-CNJp}
			C_{\mathrm{NJ},p}(X) = \supop\left\{ \frac{\norm{x+y}_p^2 + \norm{x-y}_p^2}{2\left( \norm{x}_p^2 + \norm{y}_p^2 \right)} : x,y\in X,\ (x,y)\neq(0,0) \right\}.
		\end{equation}
	\end{definition}

	\begin{definition}\label{def:generalized-r-von-Neumann-Jordan}
		Let $X$ be a  $p$-normed space$(0<p\leq 1)$ and $r \geq 2$. The generalized $r$-von Neumann-Jordan constant is defined by
		\begin{equation}\label{eq:def-CNJp-r}
			C_{\mathrm{NJ},p}^{(r)}(X) = \supop\left\{ \frac{\|x+y\|_p^r + \|x-y\|_p^r}{2^{r-1}\left(\|x\|_p^r + \|y\|_p^r\right)} : x,y \in X,\ (x,y)\neq(0,0) \right\}.
		\end{equation}
	\end{definition}

	\section{Basic Properties of $L_p(t,X)$}

In this section, we study the fundamental properties of the symmetric geometric constant $L_p(t,X)$ in complete $p$-normed spaces($0<p\leq 1$). We present its sharp bounds, establish the relationship with the auxiliary function $\gamma_p(t,X)$, and give an equivalent characterization of the generalized von Neumann-Jordan constant $C_{\mathrm{NJ},p}(X)$.

	\begin{theorem}
		Let $X$ be a $p$-normed space$(0<p\leq 1)$. Then for any $t\in[0,\frac{1}{2})$, the following inequalities hold:
		\begin{equation}\label{eq:L-bounds}
			t^{2p} + (1-t)^{2p} \leq L_p(t,X) \leq \frac{1}{2^{2p-1}} \left(1 + (1-2t)^p\right)^{2}.
		\end{equation}
	\end{theorem}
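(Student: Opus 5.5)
The plan is to prove the two inequalities in \eqref{eq:L-bounds} separately: the lower bound by one degenerate test pair, the upper bound by the $p$-triangle inequality after rewriting the two vectors in the basis $x+y$, $x-y$.

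\textbf{Lower bound.} Take $y=0$ and any $x\neq 0$. Then $x\perp_I 0$ holds trivially, since $\|x+0\|_p=\|x-0\|_p$, and $(x,y)\neq(0,0)$. By axiom (ii) of Definition~\ref{def:p-norm}, $\|tx\|_p=t^{p}\|x\|_p$ and $\|(1-t)x\|_p=(1-t)^p\|x\|_p$. So the quotient in \eqref{eq:def-L} equals
\[
\frac{t^{2p}\|x\|_p^2+(1-t)^{2p}\|x\|_p^2}{\|x\|_p^2}=t^{2p}+(1-t)^{2p},
\]
and taking the supremum gives the left inequality. If one prefers to avoid $y=0$, I would instead try $x\perp_I y$ with $y\in S_X$ (such a $y$ exists by the remark after the definition of isosceles orthogonality), but I expect the trivial pair to be what is intended.

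\textbf{Denominator is nonzero.} Let $x\perp_I y$ with $(x,y)\neq(0,0)$. If $\|x+y\|_p=0$, then $\|x-y\|_p=0$ as well, so $x=-y$ and $x=y$, giving $x=y=0$, a contradiction. Hence $\|x+y\|_p>0$ and the quotient is well defined.

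\textbf{Upper bound.} The key identities are
\[
tx+(1-t)y=\tfrac12(x+y)-\tfrac{1-2t}{2}(x-y),\qquad (1-t)x+ty=\tfrac12(x+y)+\tfrac{1-2t}{2}(x-y).
\]
Applying the $p$-triangle inequality and homogeneity, and using $1-2t>0$ for $t\in[0,\frac12)$, each of the two norms is at most
\[
2^{-p}\|x+y\|_p+\Big(\tfrac{1-2t}{2}\Big)^{p}\|x-y\|_p .
\]
Now use $\|x-y\|_p=\|x+y\|_p$ from $x\perp_I y$. This bound becomes $2^{-p}\bigl(1+(1-2t)^p\bigr)\|x+y\|_p$. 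Squaring, adding the two identical bounds, and dividing by $\|x+y\|_p^2$ gives
\[
2\cdot 2^{-2p}\bigl(1+(1-2t)^p\bigr)^2=\frac{\bigl(1+(1-2t)^p\bigr)^2}{2^{2p-1}}.
\]
Taking the supremum yields the right inequality.

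\textbf{Main obstacle.} I do not expect a real obstacle. The only point needing care is choosing the decomposition through $x\pm y$, so that the orthogonality condition can be inserted after the $p$-triangle inequality. The alternative of bounding $\|tx\|_p+\|(1-t)y\|_p$ directly does not interact with $\|x+y\|_p$ and would not work.
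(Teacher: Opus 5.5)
Your proposal is correct and is essentially the paper's own proof. The lower bound comes from the test pair $(x,0)$. The upper bound comes from writing $tx+(1-t)y$ and $(1-t)x+ty$ as $\alpha\mp(1-2t)\beta$, with $\alpha=\frac{x+y}{2}$ and $\beta=\frac{x-y}{2}$, then applying the $p$-triangle inequality and inserting $\|x+y\|_p=\|x-y\|_p$. Your explicit check that $\|x+y\|_p>0$ whenever $(x,y)\neq(0,0)$ and $x\perp_I y$ is a small detail the paper leaves implicit.
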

	
	\begin{proof}
		We first prove the lower bound. Take any non-zero vector $x\in X$ and set $y=0$. By the definition of isosceles orthogonality, it is trivial to verify $x\perp_I y$. Substituting into the definition of $L_p(t,X)$, by condition (ii) of Definition \ref{def:p-norm}, we have:
		$$
		\frac{\|tx\|_p^2 + \|(1-t)x\|_p^2}{\|x\|_p^2} = \frac{(t^p\|x\|_p)^2 + ((1-t)^p\|x\|_p)^2}{\|x\|_p^2} = t^{2p} + (1-t)^{2p}.
		$$
		Taking the supremum over all eligible vector pairs yields the lower bound
		$$L_p(t,X) \geq t^{2p} + (1-t)^{2p}.$$
		
		Next, we prove the upper bound. Let $x, y \in X$ satisfy $x \perp_I y$ and $(x,y)\neq(0,0)$. We perform the variable transformation
		$$\alpha = \frac{x+y}{2}, \quad \beta = \frac{x-y}{2}.$$
		By the isosceles orthogonality condition $\|x+y\|_p = \|x-y\|_p$ and the condition (ii) of Definition \ref{def:p-norm}, we have $\|\alpha\|_p = \|\beta\|_p$. Moreover, the linear combinations in the numerator can be rewritten as:
		$$tx+(1-t)y = \alpha - (1-2t)\beta, \quad (1-t)x+ty = \alpha + (1-2t)\beta.$$
		
		Let $s=1-2t$. Since $t\in[0,\frac{1}{2})$, we have $s\in(0,1]$. By the triangle inequality and $p$-homogeneity of the $p$-norm,
		$$
		\|\alpha \pm s\beta\|_p \leq \|\alpha\|_p + \|s\beta\|_p = \|\alpha\|_p + s^p\|\beta\|_p.
		$$
		Combining with $\|\alpha\|_p = \|\beta\|_p$, we get
		$$
		\|\alpha \pm s\beta\|_p \leq \|\alpha\|_p \left(1 + s^p\right) = \|\alpha\|_p \left(1 + (1-2t)^p\right).
		$$
		
		Therefore, the numerator of the ratio satisfies
		$$
		\|\alpha - s\beta\|_p^2 + \|\alpha + s\beta\|_p^2 \leq 2 \cdot \|\alpha\|_p^2 \left(1 + (1-2t)^p\right)^2.
		$$
		Note that $\|x+y\|_p = \|2\alpha\|_p = 2^p \|\alpha\|_p$ by condition (ii) of Definition \ref{def:p-norm}, thus $\|x+y\|_p^2 = 2^{2p} \|\alpha\|_p^2$.
		
		Substituting back into the definition of $L_p(t,X)$, we obtain
		\[
		\begin{aligned}
			\frac{\|tx+(1-t)y\|_p^2 + \|(1-t)x+ty\|_p^2}{\|x+y\|_p^2} 
			&\leq \frac{2 \cdot \|\alpha\|_p^2 \left(1 + (1-2t)^p\right)^2}{2^{2p} \|\alpha\|_p^2} \\
			&= \frac{1}{2^{2p-1}} \left(1 + (1-2t)^p\right)^2.
		\end{aligned}
		\]
		Taking the supremum over all $x,y\in X$ with $(x,y)\neq(0,0)$ and $x\perp_I y$ yields the upper bound. This completes the proof.
	\end{proof}

	In what follows, we further reveal the essential connection between $L_p(t,X)$ and $\gamma_p(t,X)$. Theorems \ref{thm:identity-L-gamma} and \ref{thm:identity-L-CNJ} are both dedicated to this topic, showing that isosceles orthogonality can characterize geometric constants in complete $p$-normed spaces ($0<p\leq 1$), which extends the corresponding conclusions in Banach spaces.

	\begin{theorem}\label{thm:identity-L-gamma}
		Let $X$ be a $p$-normed space $(0<p\leq 1)$. Then the following identity between the symmetric geometric constant $L_p(t,X)$ and the function $\gamma_p(t,X)$ holds:
		\begin{equation}
			L_p(t,X) = \frac{1}{2^{2p-1}} \cdot \gamma_p(1-2t,X).
		\end{equation}
	\end{theorem}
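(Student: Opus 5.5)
The plan is to reuse the substitution from the proof of the preceding theorem, now as a two-way correspondence. Given $x\perp_I y$ with $(x,y)\neq(0,0)$, set $\alpha=\frac{x+y}{2}$ and $\beta=\frac{x-y}{2}$. Isosceles orthogonality together with condition (ii) of Definition \ref{def:p-norm} gives $\|\alpha\|_p=\|\beta\|_p\neq 0$. With $s=1-2t\in(0,1]$, the numerator becomes $\|\alpha-s\beta\|_p^2+\|\alpha+s\beta\|_p^2$. The denominator is $\|x+y\|_p^2=2^{2p}\|\alpha\|_p^2$.

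Next normalize. Put $u=\|\alpha\|_p^{-1/p}\alpha$ and $v=\|\beta\|_p^{-1/p}\beta$, so $u,v\in S_X$ by $p$-homogeneity. Since $\|\alpha\|_p=\|\beta\|_p=:c$, the common scalar $c^{1/p}$ factors out. This gives $\alpha\pm s\beta=c^{1/p}(u\pm sv)$, hence $\|\alpha\pm s\beta\|_p=c\,\|u\pm sv\|_p$. The ratio in \eqref{eq:def-L} then equals
\[
\frac{c^2\left(\|u+sv\|_p^2+\|u-sv\|_p^2\right)}{2^{2p}c^2}=\frac{1}{2^{2p-1}}\cdot\frac{\|u+sv\|_p^2+\|u-sv\|_p^2}{2}\le \frac{1}{2^{2p-1}}\gamma_p(s,X).
\]
Taking the supremum over admissible $x,y$ yields $L_p(t,X)\le 2^{1-2p}\gamma_p(1-2t,X)$.

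For the reverse inequality, start from arbitrary $u,v\in S_X$ and define $x=u+v$ and $y=u-v$. Then $\|x+y\|_p=\|2u\|_p=2^p=\|2v\|_p=\|x-y\|_p$, so $x\perp_I y$ and $(x,y)\neq(0,0)$. Here $\alpha=u$ and $\beta=v$ with $c=1$, so running the computation above backwards shows the ratio for this pair equals $2^{1-2p}\cdot\frac{\|u+sv\|_p^2+\|u-sv\|_p^2}{2}$. Taking the supremum over $u,v\in S_X$ gives the matching lower bound, and the identity follows.

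The main point needing care is the normalization step: scalars scale norms by the $p$-th power, so the scaling factor must be $c^{1/p}$ and not $c$. It works cleanly only because $\|\alpha\|_p=\|\beta\|_p$ allows the same scalar to be used for both vectors. Everything else follows directly from the definitions.
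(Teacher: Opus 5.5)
Your proposal is correct and follows the paper's route. For the upper bound you use the same substitution $\alpha=\frac{x+y}{2}$, $\beta=\frac{x-y}{2}$ with normalization by the common factor $\|\alpha\|_p^{1/p}=\|\beta\|_p^{1/p}$. For the lower bound, your pair $(u+v,u-v)$ differs from the paper's $\bigl(\frac{u+v}{2},\frac{u-v}{2}\bigr)$ only by a scalar, which leaves the ratio unchanged, and stating $\|\alpha\|_p\neq 0$ (since $\alpha=\beta=0$ forces $x=y=0$) makes explicit a point the paper leaves implicit.
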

	
	\begin{proof}
		Let $x, y \in X$ such that $x \perp_I y$ and $(x,y) \neq (0,0)$. We choose
		\[
		\alpha = \frac{x+y}{2}, \quad \beta = \frac{x-y}{2}.
		\]
		Then
		\[
		tx+(1-t)y = \alpha - (1-2t)\beta, \quad (1-t)x+ty = \alpha + (1-2t)\beta.
		\]
		By the definition of isosceles orthogonality and condition (ii) in Definition \ref{def:p-norm},
		\[
		\|\alpha\|_p = \frac{1}{2^p}\|x+y\|_p, \quad \|\beta\|_p = \frac{1}{2^p}\|x-y\|_p,
		\]
		thus $\|\alpha\|_p = \|\beta\|_p$, and
		\[
		\frac{\|tx+(1-t)y\|_p^2 + \|(1-t)x+ty\|_p^2}{\|x+y\|_p^2}
		= \frac{\|\alpha - (1-2t)\beta\|_p^2 + \|\alpha + (1-2t)\beta\|_p^2}{2^{2p} \|\alpha\|_p^2}.
		\]
		
		Let $a = \frac{\alpha}{\|\alpha\|_p^{\frac{1}{p}}}$, $b = \frac{\beta}{\|\beta\|_p^{\frac{1}{p}}}$, then $a, b \in S_X$, and
		\[
		\begin{aligned}
			\frac{\|\alpha - (1-2t)\beta\|_p^2 + \|\alpha + (1-2t)\beta\|_p^2}{\|\alpha\|_p^2}
			&= \|a - (1-2t)b\|_p^2 + \|a + (1-2t)b\|_p^2 \\
			&\leq 2 \cdot \gamma_p(1-2t,X).
		\end{aligned}
		\]
		It follows that
		\[
		\frac{\|tx+(1-t)y\|_p^2 + \|(1-t)x+ty\|_p^2}{\|x+y\|_p^2}
		\leq \frac{1}{2^{2p-1}} \gamma_p(1-2t,X).
		\]
		Taking the supremum over all eligible $x,y$, we obtain
		\[
		L_p(t,X) \leq \frac{1}{2^{2p-1}} \gamma_p(1-2t,X).
		\]
		
		On the other hand, let $a, b \in S_X$. We choose $\alpha = \frac{a+b}{2}$, $\beta = \frac{a-b}{2}$, then $\alpha+\beta = a \in S_X$ and $\alpha-\beta = b \in S_X$. Since
		\[
		\begin{aligned}
			&\frac{\|a - (1-2t)b\|_p^2 + \|a + (1-2t)b\|_p^2}{2} \\
			=&\ \frac{\|(\alpha+\beta) - (1-2t)(\alpha-\beta)\|_p^2 + \|(\alpha+\beta) + (1-2t)(\alpha-\beta)\|_p^2}{2} \\
			=&\ 2^{2p-1} \cdot \frac{\|t\alpha + (1-t)\beta\|_p^2 + \|(1-t)\alpha + t\beta\|_p^2}{\|\alpha+\beta\|_p^2} \\
			\leq&\ 2^{2p-1} L_p(t,X).
		\end{aligned}
		\]
		that is
		\[
		\frac{\|a - (1-2t)b\|_p^2 + \|a + (1-2t)b\|_p^2}{2} \leq 2^{2p-1} L_p(t,X).
		\]
		Taking the supremum over all $a,b \in S_X$ yields
		\[
		\gamma_p(1-2t,X) \leq 2^{2p-1} L_p(t,X),
		\]
		which is equivalent to
		\[
		L_p(t,X) \geq \frac{1}{2^{2p-1}} \gamma_p(1-2t,X).
		\]
		Combining both inequalities completes the proof.
	\end{proof}

	In complete $p$-normed spaces ($0<p\leq 1$), we can also establish an equivalent characterization between the isosceles orthogonal constant and the generalized von Neumann-Jordan constant, which shows that isosceles orthogonality is also applicable to characterizing the equivalent forms of such geometric constants.
	\begin{theorem}\label{thm:identity-L-CNJ}
	Let $X$ be a complete $p$-normed space $(0<p\leq 1)$. Then the following result holds for its generalized von Neumann-Jordan constant:
		\begin{equation}
			C_{\mathrm{NJ},p}(X) = \sup_{\eta \in [0,1]} \frac{2^{2p-1} \cdot L_p\left( \frac{1-\eta}{2} ,X\right)}{1 + \eta^{2p}}.
		\end{equation}
		
	\end{theorem}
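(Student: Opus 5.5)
The plan is to reduce the statement to an identity involving only $\gamma_p$, using Theorem \ref{thm:identity-L-gamma}. With $t=\frac{1-\eta}{2}$ we have $1-2t=\eta$, so Theorem \ref{thm:identity-L-gamma} gives $2^{2p-1}L_p\left(\frac{1-\eta}{2},X\right)=\gamma_p(\eta,X)$. It therefore suffices to prove the $p$-normed analogue of the Yang--Wang formula,
\[
C_{\mathrm{NJ},p}(X)=\sup_{\eta\in[0,1]}\frac{\gamma_p(\eta,X)}{1+\eta^{2p}},
\]
and I will prove the two inequalities separately.

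\emph{Upper bound for $C_{\mathrm{NJ},p}(X)$.} Take $(x,y)\neq(0,0)$. The quotient in \eqref{eq:def-CNJp} is unchanged under swapping $x$ and $y$, because $\|y-x\|_p=\|x-y\|_p$ by (ii). So I may assume $\|x\|_p\ge\|y\|_p$, hence $x\neq0$. If $y=0$ the quotient equals $1$, and this is at most $\gamma_p(0,X)/1=1$. Otherwise write $x=\|x\|_p^{1/p}u$ and $y=\|y\|_p^{1/p}v$ with $u,v\in S_X$, and set $\eta=(\|y\|_p/\|x\|_p)^{1/p}\in(0,1]$. Then $x\pm y=\|x\|_p^{1/p}(u\pm\eta v)$, so $p$-homogeneity gives $\|x\pm y\|_p=\|x\|_p\|u\pm\eta v\|_p$. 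Also $\|y\|_p^2=\eta^{2p}\|x\|_p^2$. Hence the quotient equals
\[
\frac{\|u+\eta v\|_p^2+\|u-\eta v\|_p^2}{2(1+\eta^{2p})}\le\frac{\gamma_p(\eta,X)}{1+\eta^{2p}},
\]
and taking the supremum over $(x,y)$ gives "$\le$".

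\emph{Reverse inequality.} For fixed $\eta\in[0,1]$ and $u,v\in S_X$, put $x=u$ and $y=\eta v$. The same computation shows that the quotient in \eqref{eq:def-CNJp} equals exactly $\frac{\|u+\eta v\|_p^2+\|u-\eta v\|_p^2}{2(1+\eta^{2p})}$. Taking the supremum over $u,v$ and then over $\eta$ gives "$\ge$".

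\emph{Main obstacle.} The only delicate point is the endpoint $\eta=0$. There $t=\frac12$ lies outside the domain $[0,\frac12)$ on which $L_p$ is defined and on which Theorem \ref{thm:identity-L-gamma} holds. I would handle this in two ways:
\begin{itemize}
\item[(a)] Note that the $\eta=0$ term in the $\gamma_p$-formulation equals $1$. This value is already dominated by the supremum over $\eta\in(0,1]$, for instance via the choice $y=x$, which gives quotient $\frac{2^{2p}}{2}\cdot\frac{1}{2}\cdot 2=2^{2p-1}\ge1$ when $\eta=1$. So restricting to $\eta\in(0,1]$ changes nothing.
\item[(b)] Alternatively, interpret $L_p(\frac12,X)$ by the same definition \eqref{eq:def-L}. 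The ratio is then $2\|\frac{x+y}{2}\|_p^2/\|x+y\|_p^2=2^{1-2p}$, which matches $\gamma_p(0,X)/2^{2p-1}$.
\end{itemize}
Either way, the supremum in the statement agrees with $\sup_\eta\gamma_p(\eta,X)/(1+\eta^{2p})$, which completes the argument.
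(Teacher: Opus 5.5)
Your argument follows the paper's route: substitute $\eta=1-2t$ in Theorem \ref{thm:identity-L-gamma} and use $C_{\mathrm{NJ},p}(X)=\sup_{\eta\in[0,1]}\gamma_p(\eta,X)/(1+\eta^{2p})$. The paper only ``recalls'' that last formula. Your normalization $x=\|x\|_p^{1/p}u$, $y=\eta\|x\|_p^{1/p}v$ with $\eta=(\|y\|_p/\|x\|_p)^{1/p}$ actually proves it, and correctly. You are also right that $\eta=0$ corresponds to $t=\frac12\notin[0,\frac12)$, which the paper ignores, and your remedy (b) settles this.

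Remedy (a), however, rests on a miscalculation. With $y=x$ the quotient in \eqref{eq:def-CNJp} is $\|2x\|_p^2/(4\|x\|_p^2)=2^{2p-2}$. In the $\gamma_p$-formulation, $u=v$ at $\eta=1$ contributes $\frac{2^{2p}}{2}\cdot\frac{1}{1+1}=2^{2p-2}$; the extra factor $2$ in your expression is spurious. This value is $<1$ for every $p<1$, and even your $2^{2p-1}$ drops below $1$ once $p<\frac12$.

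The conclusion of (a) is still true, but only through a limit. For example, taking $u=v$ gives $\frac{\gamma_p(\eta,X)}{1+\eta^{2p}}\ge\frac{(1+\eta)^{2p}+(1-\eta)^{2p}}{2(1+\eta^{2p})}\to1$ as $\eta\to0^+$. No fixed $\eta>0$ can serve as a witness in general. Take the $p$-norm $\|x\|_p=|x|^p$ on $\mathbb{R}^2$, with $|\cdot|$ Euclidean and $p<1$. Concavity of $s\mapsto s^p$ and the parallelogram law give $\gamma_p(\eta,X)\le(1+\eta^2)^p<1+\eta^{2p}$ for all $\eta\in(0,1]$. So the value $1$ is attained only at $\eta=0$ and is approached only as $\eta\to0^+$.
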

	
	\begin{proof}
		First, recall the equivalent definition of the generalized von Neumann-Jordan constant:
		\[
		C_{\mathrm{NJ},p}(X) = \sup_{\eta \in [0,1]} \frac{\gamma_p(\eta,X)}{1+\eta^{2p}}.
		\]
		Combined with Theorem \ref{thm:identity-L-gamma}, we have
		\[
		L_p(t,X) = \frac{1}{2^{2p-1}} \cdot \gamma_p(1-2t,X).
		\]
		Let $1-2t = \eta$, then $\eta \in [0,1]$ and $t = \frac{1-\eta}{2}$, which gives
		\[
		\gamma_p(\eta,X) = 2^{2p-1} \cdot L_p\left( \frac{1-\eta}{2},X \right).
		\]
		Substituting this into the expression of $C_{\mathrm{NJ},p}(X)$, we obtain
		\[
		C_{\mathrm{NJ},p}(X) = \sup_{\eta \in [0,1]} \frac{2^{2p-1} \cdot L_p\left( \frac{1-\eta}{2},X \right)}{1 + \eta^{2p}}.
		\]
		
		which completes the proof.
	\end{proof}

	\section{Milman-Type Moduli and Parameterized Geometric Constants}

	Motivated by the James constant \(J_p(X)\) and Schäffer constant \(S_p(X)\) defined above, we introduce the following parametric Milman-type moduli in \(p\)-normed spaces, which extend \(J_p(X)\) and \(S_p(X)\) to the case involving a real parameter \(t\geq0\).
	\begin{definition}\label{def:Milman-moduli}
		Let $(X,\|\cdot\|_{p})$ be a complete $p$-normed space$(0<p\leq 1)$. For $t \geq0$, the Milman-type moduli are defined by
		\begin{equation}\label{eq:def-Jp-t}
			J_{p}(t,X)=\sup\left\{ \min\{ \| x+ty\| _{p},\| x-ty\| _{p}\} : x,y\in S_{X}\right\},
		\end{equation}
		and
		\begin{equation}\label{eq:def-Sp-t}
			S_{p}(t,X)=\inf\left\{ \max\{ \| x+ty\| _{p},\| x-ty\| _{p}\} : x,y\in S_{X}\right\}.
		\end{equation}
	\end{definition}
	
	\begin{remark}
		\begin{enumerate}
			\item[(i)] If Y is a subspace of X , then it is clear that \(J_{p}(t, X) \geq J_{p}(t, Y)\) and \(S_{p}(t, X) \leq S_{p}(t, Y)\).
			\item[(ii)] If \(\dim X=1\) , then for any \(t \geq0\) and \(0<p \leq1\) , \(J_{p}(t, X)=|1-t|^{p}\) and \(S_{p}(t, X)=(1+t)^{p}\).
		\end{enumerate}
	\end{remark}

	\begin{proposition}

		Let \((X,\|\cdot\|_{p})\) be a complete $p$-normed  space  . Then
		\[
		J_{p}(t, X)=t^{p} J_{p}\left(\frac{1}{t}, X\right) \quad \text{and} \quad S_{p}(t, X)=t^{p} S_{p}\left(\frac{1}{t}, X\right).
		\]
	\end{proposition}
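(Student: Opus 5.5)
The plan is a direct rescaling argument using only $p$-homogeneity (condition (ii) of Definition \ref{def:p-norm}) and the symmetry of the defining sets in $x$ and $y$. Throughout I take $t>0$; for $t=0$ the right-hand side is not defined.

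First I fix $x,y\in S_X$. Writing $x\pm ty = t\left(y\pm \tfrac{1}{t}x\right)$, homogeneity gives
\[
\|x+ty\|_p = t^p\left\|y+\tfrac{1}{t}x\right\|_p .
\]
For the minus sign,
\[
\|x-ty\|_p=\|-(ty-x)\|_p=t^p\left\|y-\tfrac{1}{t}x\right\|_p ,
\]
where the sign is removed by $\|-z\|_p=\|z\|_p$, which follows from (ii) with $\lambda=-1$. Hence
\[
\min\{\|x+ty\|_p,\|x-ty\|_p\}=t^p\min\left\{\left\|y+\tfrac1t x\right\|_p,\left\|y-\tfrac1t x\right\|_p\right\},
\]
and the same identity holds with $\max$ in place of $\min$, since multiplying by the positive constant $t^p$ preserves order.

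Next I take suprema (for $J_p$) and infima (for $S_p$) over $x,y\in S_X$. The map $(x,y)\mapsto(y,x)$ is a bijection of $S_X\times S_X$ onto itself, so the supremum of the right-hand side equals $J_p(1/t,X)$ with the roles of the variables renamed. This gives $J_p(t,X)=t^pJ_p(1/t,X)$. The identical argument with infima gives $S_p(t,X)=t^pS_p(1/t,X)$.

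There is no real obstacle here. The only points needing care are the sign handling $\|-z\|_p=\|z\|_p$ and noting explicitly that $t>0$ is required for the statement to make sense.
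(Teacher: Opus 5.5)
Your proof is correct and follows the same route as the paper: for each fixed $x,y\in S_X$ you factor out $t^p$ via $p$-homogeneity, then pass to the supremum (for $J_p$) or infimum (for $S_p$) over $S_X$. You also spell out three points the paper leaves implicit in the phrase ``arbitrariness of $x$ and $y$'': the swap $(x,y)\mapsto(y,x)$, the identity $\|-z\|_p=\|z\|_p$, and the restriction $t>0$.
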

	
	\begin{proof}
		Note that, for any \(x, y \in S_{X}\) , we have
		\[			
		\min \{ \| x+ty\| _{p},\| x-ty\| _{p}\} =t^{p}\min\left\{ \left\| \frac {1}{t}x+y\right\| _{p},\left\| \frac {1}{t}x-y\right\| _{p}\right\}.
		\]	
		Then by the definition of \(J_{p}(t, X)\) and the arbitrariness of x and y , we get that \(J_{p}(t, X)=t^{p} J_{p}(\frac{1}{t}, X)\) . Similarly, we can obtain that \(S_{p}(t, X)=t^{p} S_{p}(\frac{1}{t}, X)\) .
	\end{proof}
	
	\begin{proposition}\label{222}
		Let \((X,\|\cdot\|_{p})\) be a complete  $p$-normed  space \((0<p \leq1)\)  . Then
		\begin{enumerate}
			\item[(i)] \(\min\left\{1, t^{p}\right\} 2^{p-1} \leq S_{p}(t, X) \leq(1+t)^{p}\).
			\item[(ii)] \(|1-t|^{p} \leq J_{p}(t, X) \leq 1+t^{p}\).
		\end{enumerate}
	\end{proposition}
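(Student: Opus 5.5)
The plan is to prove each of the four inequalities directly from the definitions of $S_p(t,X)$ and $J_p(t,X)$ in Definition \ref{def:Milman-moduli}. The tools are the $p$-homogeneity (ii) and the triangle inequality (iii) of Definition \ref{def:p-norm}. Two of the bounds come from testing the single pair $y=x$, and the other two from the triangle inequality applied to the decompositions $2x=(x+ty)+(x-ty)$ and $2ty=(x+ty)-(x-ty)$.

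For the upper bound in (i) and the lower bound in (ii), I would fix any $x\in S_X$ and take $y=x\in S_X$. Then (ii) gives
\[
\|x+tx\|_p=(1+t)^p \quad\text{and}\quad \|x-tx\|_p=|1-t|^p .
\]
Since $|1-t|\le 1+t$, the maximum of the two is $(1+t)^p$ and the minimum is $|1-t|^p$. Because $S_p(t,X)$ is an infimum and $J_p(t,X)$ a supremum over pairs including this one, we get $S_p(t,X)\le (1+t)^p$ and $J_p(t,X)\ge |1-t|^p$.

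For the upper bound in (ii), I would take arbitrary $x,y\in S_X$ and use
\[
\min\{\|x+ty\|_p,\|x-ty\|_p\}\le \|x+ty\|_p\le \|x\|_p+t^p\|y\|_p=1+t^p ,
\]
then take the supremum over $x,y\in S_X$.

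For the lower bound in (i), which is the only step needing a small idea, I would write $2x=(x+ty)+(x-ty)$. By (ii) and (iii),
\[
2^p=\|2x\|_p\le \|x+ty\|_p+\|x-ty\|_p\le 2\max\{\|x+ty\|_p,\|x-ty\|_p\},
\]
so the maximum is at least $2^{p-1}$. Similarly, $2ty=(x+ty)-(x-ty)$ gives $2^p t^p\le 2\max\{\|x+ty\|_p,\|x-ty\|_p\}$, so the maximum is at least $t^p 2^{p-1}$. Hence the maximum is at least $\max\{1,t^p\}2^{p-1}$, and in particular at least $\min\{1,t^p\}2^{p-1}$. Taking the infimum over $x,y\in S_X$ gives the claim. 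The case $t=0$ needs no separate treatment: there $\min\{1,t^p\}=0$ and the bound is trivial. No step is a real obstacle; the only care needed is applying $p$-homogeneity correctly to the scalars $2$ and $t$.
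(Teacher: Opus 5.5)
Your proof is correct and follows the same route as the paper. Like the paper, you use the test pair $y=x$ for $S_p(t,X)\le(1+t)^p$ and $J_p(t,X)\ge|1-t|^p$, the triangle inequality for $J_p(t,X)\le 1+t^p$, and the decomposition $2x=(x+ty)+(x-ty)$ for the lower bound on $S_p(t,X)$ (the paper's display garbles this as $\|(x+ty)+|x-ty|\|_p$). Your extra step with $2ty=(x+ty)-(x-ty)$ is not needed for the stated bound, since $2^{p-1}\ge\min\{1,t^p\}2^{p-1}$ already, but it correctly strengthens the conclusion to $S_p(t,X)\ge\max\{1,t^p\}2^{p-1}$.
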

	
	\begin{proof}
		(i) First, since
		\[				
		\begin{aligned}
			\max \left\{\| x+t y\| _{p},\| x-t y\| _{p}\right\} & \geq \frac{1}{2}\left(\| x+t y\| _{p}+\| x-t y\| _{p}\right) \\
			& \geq \frac{1}{2}\left(\| (x+t y)+|x-t y|\| _{p}\right) \\
			& \geq \min \left\{1, t^{p}\right\} 2^{p-1} .
		\end{aligned}
		\]	
		This implies that \(S_{p}(t, X) \geq \min \{1, t^{p}\} 2^{p-1}\).
		
		Next, take \(y=x \in S_{X}\) , we get that \(\max {\|x+t y\|_{p},\|x-t y\|_{p}}=(1+t)^{p}\) , which means that \(S_{p}(t, X) \leq(1+t)^{p}\).
		
		(ii) On the one hand, we also let \(y=x\) , then we can easily get that
		\[					
		\min \left\{\| x+t y\| _{p},\| x-t y\| _{p}\right\}=|1-t|^{p} .
		\]				
		This implies that \(J_{p}(t, X) \geq|1-t|^{p}\) .
		
		On the other hand, since \(\|x+t y\|_{p},\|x-t y\|_{p} \leq\|x\|_{p}+\|t y\|_{p}=1+t^{p}\) , it follows that \(J_{p}(t, X) \leq 1+t^{p}\) as desired.
	\end{proof}
	
	The following example will show that the upper bound of \(J_{p}(t, X)\) and \(S_{p}(t, X)\) can be reached.
	
	\begin{example}
		Let $0<p\leq1$. In \cite{Xiao2025}, the authors have shown that $l^p$ is a complete $p$-normed space$(0<p\leq 1)$. For this space, we have $J_p(t, l^p) = 1+t^p$ and $S_p(t, l^p) = (1+t)^p$ for all $t\geq0$, and in particular $J_p(1, l^p)=S_p(1, l^p)=2$.
	\end{example}
	
	\begin{proof}
		First, we prove $J_p(t, l^p) = 1 + t^p$.
		Let $x=(1,0,0,\cdots)$ and $y=(0,1,0,\cdots)$, then $x, y \in S_{l^p}$.
		By the definition of the $p$-norm, we have
		$$
		\|x+ty\|_p = |1|^p + |t|^p = 1 + t^p, \quad \|x-ty\|_p = |1|^p + |-t|^p = 1 + t^p.
		$$
		Thus
		$$
		\min\left\{ \|x+ty\|_p, \|x-ty\|_p \right\} = 1 + t^p,
		$$
		which implies $J_p(t,l^p) \geq 1 + t^p$.
		Combining with the upper bound $J_p(t,X) \leq 1 + t^p$ from \ref{222} (ii), we get $J_p(t,l^p) = 1 + t^p$.
		
		Next, we prove $S_p(t, l^p) = (1+t)^p$.
		Take $y=x \in S_{l^p}$, then
		$$
		\|x+ty\|_p = \|(1+t)x\|_p = (1+t)^p \|x\|_p = (1+t)^p, \quad \|x-ty\|_p = \|(1-t)x\|_p = |1-t|^p.
		$$
		Thus
		$$
		\max\left\{ \|x+ty\|_p, \|x-ty\|_p \right\} = (1+t)^p,
		$$
		which implies $S_p(t,l^p) \geq (1+t)^p$.
		Combining with the upper bound $S_p(t,X) \leq (1+t)^p$ from Proposition 4.4 (i), we get $S_p(t,l^p) = (1+t)^p$.
	\end{proof}
	
	To further explore the equivalent characterizations of the two moduli \(J_p(t,X)\) and \(S_p(t,X)\), we present the following two lemmas, which extend the domain of the vectors in the definition from the unit sphere to the unit ball and the whole space, and lay a foundation for the proof of the subsequent core theorems.
	
	\begin{lemma}
		Let $(X,\|\cdot\|_{p})$ be a complete $p$-normed space$(0<p\leq 1)$. Then
		\[				
		\begin{aligned}
			J_{p}(t, X) & =\sup \left\{\min \left\{\| x+t y\| _{p},\| x-t y\| _{p}\right\}: x, y \in X,\| x\| _{p}=\| y\| _{p}=a^{p} \leq 1\right\} \\
			& =\sup \left\{\min \left\{\| x+t y\| _{p},\| x-t y\| _{p}\right\}: x, y \in B_{X}\right\} .
		\end{aligned}
		\]	
		Here, $a \in [0,1]$ is a non-negative real number, and $a^p$ denotes the common value of the $p$-norms of $x$ and $y$.
	\end{lemma}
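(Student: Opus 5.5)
The plan is to prove the chain of inequalities
\[
J_p(t,X)\le \sup\{\cdots:\ \|x\|_p=\|y\|_p=a^p\le 1\}\le \sup\{\cdots:\ x,y\in B_X\}\le J_p(t,X).
\]
The first two inequalities are immediate. Taking $a=1$ recovers exactly the supremum over $S_X$ in \eqref{eq:def-Jp-t}. Every pair with $\|x\|_p=\|y\|_p=a^p\le 1$ lies in $B_X\times B_X$. So everything reduces to showing
\[
\min\{\|x+ty\|_p,\|x-ty\|_p\}\le J_p(t,X)\qquad\text{for all }x,y\in B_X .
\]

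\textbf{Step 1: reduce to the segment setting.} Write $x=su$ and $y=rv$ with $u,v\in S_X$ and $s,r\in[0,1]$, where $s=\|x\|_p^{1/p}$ and $r=\|y\|_p^{1/p}$; the cases $x=0$ or $y=0$ are trivial. The goal is then to show that the function
\[
F(s,r)=\min\{\|su+trv\|_p,\|su-trv\|_p\}
\]
on $[0,1]^2$ is dominated by its values on the edges $s=1$ or $r=1$. It must then further be pushed to the corner $s=r=1$, where the value is at most $J_p(t,X)$. Alternatively, one can show it is dominated by values at other pairs of unit vectors. Since $\dim X<\infty$, $B_X$ is compact and $F$ is continuous, so a maximizing pair $(x_0,y_0)\in B_X\times B_X$ exists. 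It suffices to show that a maximizer can be chosen with $x_0,y_0\in S_X$.

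\textbf{Step 2: push a maximizer to the sphere.} Suppose, say, $\|x_0\|_p<1$. I would first try a one-parameter argument on the line $\lambda\mapsto \lambda x_0\pm ty_0$. Set $w=ty_0$. Writing
\[
\lambda x_0+w=\tfrac{1+\lambda}{2}(x_0+w)+\tfrac{1-\lambda}{2}(w-x_0)
\]
and using the symmetry $\|w-x_0\|_p=\|x_0-w\|_p$ relates the two branches $\|x_0+w\|_p$ and $\|x_0-w\|_p$. The aim is to show that at least one of the endpoints $\lambda=\pm\lambda_{\max}$ does not decrease the minimum, where $\lambda_{\max}=\|x_0\|_p^{-1/p}$ is the value with $\|\lambda x_0\|_p=1$. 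Note that $-x_0$ gives the same minimum as $x_0$. The analogous move is then made in $y$.

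\textbf{Main obstacle.} This interpolation step is where I expect the difficulty. For a genuine norm ($p=1$) the map $\lambda\mapsto\|\lambda x_0+w\|$ is convex, so its maximum on an interval is attained at an endpoint, and the argument closes. For $0<p<1$ the $p$-triangle inequality only gives
\[
\|\lambda x_0+w\|_p\le\Bigl(\tfrac{1+\lambda}{2}\Bigr)^p\|x_0+w\|_p+\Bigl(\tfrac{1-\lambda}{2}\Bigr)^p\|w-x_0\|_p ,
\]
whose weights sum to up to $2^{1-p}>1$, so convexity is lost. I would first try to bypass this by working in the two-dimensional subspace $\operatorname{span}\{u,v\}$ and using the star-shapedness of $B_X$ with respect to the origin, namely $\|\lambda z\|_p=\lambda^p\|z\|_p\le\|z\|_p$ for $\lambda\le 1$. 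The idea is to show directly that the sublevel set of $F$ cannot have an interior strict maximum.

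If this fails, the fallback is to establish only the weaker inequality with a constant depending on $p$, obtained from the displayed bound. In that case I would examine whether the lemma needs an extra hypothesis, such as $p=1$ or a suitable $p$-convexity of $X$, to hold as stated.
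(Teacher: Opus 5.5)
\paragraph{There is a genuine gap.} Your reduction to bounding $\Phi(x,y)=\min\{\|x+ty\|_p,\|x-ty\|_p\}$ on $B_X\times B_X$ is fine. The middle supremum does not even need it: a pair with $\|x\|_p=\|y\|_p=a^p$ rescales to a unit pair, with the minimum multiplied by $a^p\le 1$.

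The problem is Step~2, which is the entire content of the lemma. It is not carried out, and the mechanism you propose is wrong as stated even for $p=1$.
\begin{itemize}
\item \emph{Endpoint claim.} The function $h(\lambda)=\min\{g(\lambda),g(-\lambda)\}$, with $g(\lambda)=\|\lambda x_0+w\|_p$, is a minimum of two functions. Convexity of $g$ therefore does not put its maximum at $\pm\lambda_{\max}$. For instance, take $t=1$, $y_0\in S_X$, $x_0=cy_0$ with $0<c<1$; then $h(1)=(1-c)^p>0=h(\pm\lambda_{\max})$.
\item \emph{Degenerate cases.} The cases $x=0$ or $y=0$ are not trivial. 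They need $J_p(t,X)\ge\max\{1,t^p\}$, which fails when $\dim X=1$: there $J_p(t,X)=|1-t|^p$, while $x\in S_X$, $y=0$ gives $1$. So any proof must use $\dim X\ge 2$, and your one-parameter argument never does.
\item \emph{Repair for $p=1$.} Suppose $h(\lambda_{\max})<h(1)$. After possibly replacing $y_0$ by $-y_0$, convexity forces $g$ to be decreasing on $(-\infty,1]$, hence $h(0)\ge h(1)$. So the maximal value is at most $\|ty_0\|\le t$, or at most $\|x_0\|=1$ when you push $y$ instead. You then close with $J(t,X)\ge\max\{1,t\}$: take an extreme point of $\mathrm{conv}\,B_X$ (it lies in $S_X$) and a direction along a supporting line there.
\item \emph{Failure for $0<p<1$.} Neither branch survives. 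The function $g$ need not even be quasi-convex: in $\ell^{1/2}_2$, $\lambda\mapsto\|e_1+\lambda(e_2-e_1)\|$ goes $1\to\sqrt2\to1$ on $[0,1]$. Your ``no interior strict maximum'' idea fails too. In $\ell^{1/2}_2$ with $ty_0=(\frac14,\frac14)$, the origin is an isolated point of $\{z:\min\|z\pm ty_0\|_p\ge 1\}$, by strict concavity of $\sqrt{\cdot}$.
\end{itemize}
So for $p<1$ your proposal establishes only the trivial inequalities.

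\paragraph{Comparison with the paper.} The paper does not supply the missing step either. Its scaling step displays $a^{-p}J_p^a(t,X)\le J_p^a(t,X)$, which is backwards for $a<1$, although the conclusion drawn from it is true. It then simply asserts near-maximizers $x_n,y_n$ with $1\le\|x_n\|_p,\|y_n\|_p\le(1+\frac1n)^p$. This does not follow from the definition of the supremum over $B_X$, whose competitors have norm at most $1$, and it is essentially the statement to be proved.

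A clean target you could aim at instead:
\begin{itemize}
\item Write $x=r\hat x$, $y=s\hat y$ with $\hat x,\hat y\in S_X$ and $r,s\in[0,1]$, and use $J_p(\tau,X)=\tau^pJ_p(1/\tau,X)$.
\item Then the supremum over $B_X$ equals $\max\{t^p,\ \sup_{0<r\le1,\,0\le s\le1}r^pJ_p(ts/r,X)\}$.
\item Consequently, the second equality holds exactly when $t\mapsto J_p(t,X)$ is non-decreasing on $[0,\infty)$.
\end{itemize}
For $p=1$ this monotonicity follows from the repaired argument above. For $0<p<1$ neither your proposal nor the paper proves it, so your suspicion that additional input is needed is well founded.
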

	
	\begin{proof}
		For the convenience of proof, in the following discussion, we denote
		\[
		J_{p}^{a}(t,X)=\sup\left\{ \min\{ \| x+ty\| _{p},\| x-ty\| _{p}\} :x,y\in X,\| x\| _{p}=\| y\| _{p}=a^{p}\leq 1\right\}
		\]
		and
		\[
		J_{p}'(t,X)=\sup\left\{ \min\left\{ \| x+ty\| _{p},\| x-ty\| _{p}\right\} : x,y\in B_{X}\right\} .
		\]
		
		First, by the definitions of the unit sphere $S_X$ and unit ball $B_X$, for any fixed $a\in[0,1]$, we have
		\[
		\begin{aligned}
			\bigl\{ (x,y) : x,y\in S_X \bigr\}
			&\subset \bigl\{ (x,y) : x,y\in X,\ \|x\|_{p}=\|y\|_{p}=a^{p}\leq 1 \bigr\} \\
			&\subset \bigl\{ (x,y) : x,y\in B_X \bigr\}.
		\end{aligned}
		\]
		Thus it follows that
		\[
		\begin{aligned}
			& \left\{ \min\left\{ \|x+ty\|_p, \|x-ty\|_p \right\} : x,y\in B_X \right\} \\
			\supset & \left\{ \min\left\{ \|x+ty\|_p, \|x-ty\|_p \right\} : x,y\in X,\ \|x\|_p = \|y\|_p = a^p \le 1 \right\} \\
			\supset & \left\{ \min\left\{ \|x+ty\|_p, \|x-ty\|_p \right\} : x,y\in S_X \right\}.
		\end{aligned}
		\]

		Furthermore, we have
		\[
		\begin{aligned}
			&J_{p}(t, X) \\
			=& \sup \left\{\min \left\{\left\| \frac{x}{a}+\frac{t y}{a}\right\| _{p},\left\| \frac{x}{a}-\frac{t y}{a}\right\| _{p}\right\}: x, y \in X,\| x\| _{p}=\| y\| _{p}=a^{p} \leq 1\right\} \\
			=& \frac{1}{a^{p}} \sup \left\{\min \left\{\| x+t y\| _{p},\| x-t y\| _{p}\right\}: x, y \in X,\| x\| _{p}=\| y\| _{p}=a^{p} \leq 1\right\} \\
			\leq& \sup \left\{\min \left\{\| x+t y\| _{p},\| x-t y\| _{p}\right\}: x, y \in X,\| x\| _{p}=\| y\| _{p}=a^{p} \leq 1\right\} \\
			=& J_{p}^{a}(t, X)
		\end{aligned}
		\]
		which means that \(J_{p}(t, X)=J_{p}^{a}(t, X)\) and this indicates that \(J_{p}^{a}(X)\) is unrelated to a . Thus, by the definition of \(J_{p}'(t, X)\) , let \(a_{n}=1+\frac{1}{n}\) , then for any \(n \in \mathbb{N}^{*}\) , there exist \(x_{n}, y_{n} \in X\) such that
		\[
		1 \leq\left\| x_{n}\right\| _{p} \leq a_{n}^{p}, \quad 1 \leq\left\| y_{n}\right\| _{p} \leq a_{n}^{p},
		\]
		and
		\[				
		J_{p}'(t, X)-\frac{1}{n} \leq \min \left\{\left\| x_{n}+t y_{n}\right\| _{p},\left\| x_{n}-t y_{n}\right\| _{p}\right\}<J_{p}'(t, X) .
		\]
		Thus, we obtain two sequences \({x_{n}}\) and \({y_{n}}\) in \({x \in X: 1 \leq\|x\|_{p} \leq2^{p}}\) Since \(\dim X<\infty\) , \({x \in X: 1 \leq\|x\|_{p} \leq2^{p}}\) is compact. This means that \({x_{n}}\) and \({y_{n}}\) have convergent subsequences. For the sake of brevity, we suppose that \(\displaystyle\lim _{n \to \infty} x_{n}=\tilde{x}\) and \(\displaystyle\lim _{n \to \infty} y_{n}=\tilde{y}\) . Hence, from (1), it follows that \(\tilde{x}, \tilde{y} \in S_{X}\) and
		\[
		J_{p}'(t, X)=\min \left\{\| \tilde{x}+t \tilde{y}\| _{p},\| \tilde{x}-t \tilde{y}\| _{p}\right\} \leq J_{p}(t, X),
		\]
		
		Hence, we obtain that \(J_{p}(t, X)=J_{p}^{a}(t, X)=J_{p}'(t, X)\).
	\end{proof}
	
	\begin{lemma}
		Let \((X,\|\cdot\|_{p})\) be a complete $p$-normed space$(0<p\leq 1)$. Then
		\[			
		\begin{aligned}
			S_{p}(t, X) & =\inf \left\{\max \left\{\| x+t y\| _{p},\| x-t y\| _{p}\right\}: x, y \in X,\| x\| _{p}=\| y\| _{p}=b^{p} \geq 1\right\} \\
			& =\inf \left\{\max \left\{\| x+t y\| _{p},\| x-t y\| _{p}\right\}: x, y \in X,\| x\| _{p},\| y\| _{p} \geq 1\right\} .
		\end{aligned}
		\]
	\end{lemma}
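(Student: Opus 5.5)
The plan mirrors the proof of the preceding lemma for $J_p(t,X)$, with the inequalities reversed. Write
\[
S_p^{b}(t,X)=\inf\left\{\max\{\|x+ty\|_p,\|x-ty\|_p\}: x,y\in X,\ \|x\|_p=\|y\|_p=b^p\geq 1\right\},
\]
and let $S_p'(t,X)$ be the infimum over all $x,y$ with $\|x\|_p,\|y\|_p\geq 1$. The pairs with $x,y\in S_X$ are among the admissible pairs of $S_p^{b}$ (take $b=1$), and those are among the admissible pairs of $S_p'$. Hence $S_p'(t,X)\leq S_p^{b}(t,X)\leq S_p(t,X)$.

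\textbf{Step 1: $S_p^b(t,X)\geq S_p(t,X)$.} Take $x,y$ with $\|x\|_p=\|y\|_p=b^p$, $b\geq 1$. Then $x/b,\,y/b\in S_X$, and by condition (ii) of Definition \ref{def:p-norm},
\[
\max\{\|x\pm ty\|_p\}=b^p\max\left\{\left\|\tfrac{x}{b}\pm t\tfrac{y}{b}\right\|_p\right\}\geq b^p S_p(t,X)\geq S_p(t,X).
\]
This gives the first equality.

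\textbf{Step 2: $S_p'(t,X)\geq S_p(t,X)$.} Take $\|x\|_p=u^p$ and $\|y\|_p=v^p$ with $u,v\geq 1$. Replacing $(x,y)$ by $(x/c,y/c)$ with $c=\min\{u,v\}\geq 1$ divides the quantity $\max\{\|x\pm ty\|_p\}$ by $c^p\geq 1$. So it suffices to treat pairs in which one vector lies on $S_X$ and the other has $p$-norm at least $1$. I would first try to compare directly with a pair of unit vectors.

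\begin{itemize}
\item If $\|x\|_p=1$, write $y=s\,y_0$ with $y_0\in S_X$ and $s\geq1$. The aim is then the monotonicity $\max\{\|x\pm sty_0\|_p\}\geq\max\{\|x\pm ty_0\|_p\}$.
\item In the Banach case this follows by writing $x\pm ty_0$ as a convex combination of $x+sty_0$ and $x-sty_0$.
\item For a $p$-norm, the same combination only gives
\[
\|x+ty_0\|_p\leq(\lambda^p+(1-\lambda)^p)\max\{\|x\pm sty_0\|_p\},
\]
whose constant exceeds $1$. So this route may fail, and I expect this step to be the main obstacle.
\end{itemize}

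\textbf{Fallback for Step 2.} If the direct comparison fails, I would use the finite-dimensional compactness argument from the $J_p$ lemma. Choose $(x_n,y_n)$ with $\|x_n\|_p,\|y_n\|_p\geq1$ and $\max\{\|x_n\pm ty_n\|_p\}\to S_p'(t,X)$. Using the reduction above together with the upper bound $S_p'\leq S_p\leq(1+t)^p$ from Proposition \ref{222}, I would argue that the norms of $x_n$ and $y_n$ stay bounded. If $\|x_n\|_p$ is large, then $\|x_n\pm ty_n\|_p$ cannot both be small unless $t^p\|y_n\|_p$ is comparably large, and vice versa. Then, since $\dim X<\infty$, extract convergent subsequences in a compact annulus $\{1\leq\|z\|_p\leq R\}$. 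Finally, show that the limit pair can be taken on $S_X$, as the paper does for $J_p$, so that $S_p'(t,X)$ is attained by a pair of unit vectors and is therefore at least $S_p(t,X)$.

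The delicate point is this last reduction of the limit pair to the unit sphere. For it I would combine the scaling of Step 1 with continuity of $s\mapsto\max\{\|x\pm sty_0\|_p\}$ in $s$.
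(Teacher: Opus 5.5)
Your Step 1 is correct (with $b$ ranging over $[1,\infty)$ it gives the first equality), and so is the reduction at the start of Step 2. The genuine gap is the last step of your fallback, ``show that the limit pair can be taken on $S_X$''. This cannot be done, because for $0<p<1$ the second equality is false.

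Your own reduction shows what the claim would require. The pairs $(x,sy)$ with $x,y\in S_X$ and $s\geq 1$ are admissible and give $\max\{\|x\pm (st)y\|_p\}$. So the claim forces $S_p(st,X)\geq S_p(t,X)$ for all $s\geq 1$, which is exactly the monotonicity you suspected fails.

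Concretely, let $X=\ell^p_2$, so $\|(a,b)\|_p=|a|^p+|b|^p$. Take $x_0=2^{-1/p}(1,1)$ and $y_0=2^{-1/p}(-1,1)$. Then $x_0,y_0\in S_X$, and $x_0+y_0=2^{1-1/p}(0,1)$ and $x_0-y_0=2^{1-1/p}(1,0)$ both have $p$-norm $2^{p-1}<1$. Choose $t>0$ with $t^p<1-2^{p-1}$, and set $x=x_0$, $y=t^{-1}y_0$. Then $\|x\|_p=1$, $\|y\|_p=t^{-p}>1$, and $\max\{\|x+ty\|_p,\|x-ty\|_p\}=2^{p-1}$. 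On the other hand, for $u,v\in S_X$ subadditivity gives $\|u\pm tv\|_p\geq \|u\|_p-\|tv\|_p=1-t^p$. Hence
\[
S_p(t,X)\geq 1-t^p>2^{p-1}\geq \inf\left\{\max\{\|x+ty\|_p,\|x-ty\|_p\}:\|x\|_p,\|y\|_p\geq 1\right\}.
\]

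The compactness route cannot repair this. For $t>0$ minimizing sequences are indeed bounded, since $\max\{\|x\pm ty\|_p\}\geq 2^{p-1}\max\{\|x\|_p,t^p\|y\|_p\}$, so a minimizer exists. But nothing puts it on $S_X\times S_X$. ``Scaling plus continuity in $s$'' also fails: in the example above, $s\mapsto\max\{\|x_0\pm sy_0\|_p\}=\frac12\bigl((1-s)^p+(1+s)^p\bigr)$ is strictly decreasing on $[0,1]$.

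The paper's proof has exactly the same hole. It simply asserts, without justification, that near-minimizers can be chosen with $1\leq\|x_n\|_p,\|y_n\|_p\leq(1+\frac1n)^p$. So ``as the paper does for $J_p$'' is not a valid template.

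What survives is:
\begin{itemize}
\item the first equality;
\item the inequality $S_p'(t,X)\leq S_p(t,X)$;
\item the full statement for $p=1$. There $s\mapsto\max\{\|x\pm sy\|\}$ and $s\mapsto\max\{\|sx\pm ty\|\}$ are even and convex, hence non-decreasing on $[0,\infty)$, which is precisely your convex-combination argument.
\end{itemize}
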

	
	\begin{proof}
		For the convenience of proof, in the following discussion, we denote
		\[
		S_{p}^{b}(t,X)=\inf\left\{ \max\{ \| x+ty\| _{p},\| x-ty\| _{p}\} : x,y\in X,\| x\| _{p}=\| y\| _{p}=b^{p}\geq 1\right\}
		\]
		and
		\[
		S_{p}'(t,X)=\inf\bigg \{ \max\{ \| x+ty\| _{p},\| x-ty\| _{p}\} : x,y\in X,\| x\| _{p},\| y\| _{p}\geq 1\bigg \} .
		\]
		
		First, since
		\[
		\{ x,y\in S_{X}\} \subseteq \{ x,y\in X,\| x\| _{p}=\| y\| _{p}=b^{p}\geq 1\} \subseteq \{ x,y\in X,\| x\| _{p},\| y\| _{p}\geq 1\} ,
		\]
		we get that \(S_{p}(t, X) \geq S_{p}^{b}(t, X) \geq S_{p}'(t, X)\).
		
		Furthermore, we have
		\[
		\begin{aligned}
			S_{p}(t, X) & =\inf \left\{\max \left\{\left\| \frac{x}{b}+\frac{t y}{b}\right\| _{p},\left\| \frac{x}{b}-\frac{t y}{b}\right\| _{p}\right\}: x, y \in X,\| x\| _{p}=\| y\| _{p}=b^{p} \geq 1\right\} \\
			& =\frac{1}{b^{p}} \inf \left\{\max \left\{\| x+t y\| _{p},\| x-t y\| _{p}\right\}: x, y \in X,\| x\| _{p}=\| y\| _{p}=b^{p} \geq 1\right\} \\
			& \leq \inf \left\{\max \left\{\| x+t y\| _{p},\| x-t y\| _{p}\right\}: x, y \in X,\| x\| _{p}=\| y\| _{p}=b^{p} \geq 1\right\} \\
			& =S_{p}^{b}(t, X),
		\end{aligned}
		\]
		which means that \(S_{p}(t, X)=S_{p}^{b}(t, X)\) , and this indicates that \(S_{p}^{b}(X)\) is unrelated to b. Thus, by the definition of \(S_{p}'(t, X)\) , let \(b_{n}=1+\frac{1}{n}\) , then for any \(n \in \mathbb{N}^{*}\) , there exist \(x_{n}, y_{n} \in X\) such that
		\[
		1 \leq\left\| x_{n}\right\| _{p} \leq b_{n}^{p},\quad 1 \leq\left\| y_{n}\right\| _{p} \leq b_{n}^{p},
		\]
		and
		\begin{equation}
			S_{p}'(t, X) \leq \max \left\{\left\| x_{n}+t y_{n}\right\| _{p},\left\| x_{n}-t y_{n}\right\| _{p}\right\}<S_{p}'(t, X)+\frac{1}{n} .
		\end{equation}
		
		Thus, we obtain two sequences \({x_{n}}\) and \({y_{n}}\) in \({x \in X: 1 \leq\|x\|_{p} \leq2^{p}}\) Since \(\dim X<\infty\) , \({x \in X: 1 \leq\|x\|_{p} \leq2^{p}}\) is compact. This means that \({x_{n}}\) and \({y_{n}}\) have convergent subsequences. For the sake of brevity, we suppose that \(\displaystyle\lim _{n \to \infty} x_{n}=\tilde{x}\) and \(\displaystyle\lim _{n \to \infty} y_{n}=\tilde{y}\) . Hence, from (3), it follows that \(\tilde{x}, \tilde{y} \in S_{X}\) and
		\[
		S_{p}'(t,X)=\max\{ \| \tilde {x}+t\tilde {y}\| _{p},\| \tilde {x}-t\tilde {y}\| _{p}\} \geq S_{p}(t,X),
		\]
		
		Hence, we obtain that \(S_{p}(t, X)=S_{p}^{b}(t, X)=S_{p}'(t, X)\).
	\end{proof}
	
	With the equivalent characterizations established in the above lemmas, we can now prove the core inequalities for the product of \(J_p(t,X)\) and \(S_p(t,X)\), which reveal the intrinsic relationship between these two moduli.
	
	\begin{theorem}
		Let \((X,\|\cdot\|_{p})\) be a complete p-normed space. Then
		\begin{equation}
			\min\left\{ 1,t^{p}\right\} 2^{p}\leq J_{p}(t,X)S_{p}(t,X)\leq \max\{ 1,t^{p}\} 2^{p}.
		\end{equation}
	\end{theorem}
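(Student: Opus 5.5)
The plan is to derive both inequalities from the same substitution, transferring an almost-extremal pair for one modulus to a test pair for the other. The tools are the two preceding lemmas: $J_p(t,X)$ may be computed over $B_X$, and $S_p(t,X)$ over pairs with $\|x\|_p,\|y\|_p\ge 1$. The key identity is the following. For $x,y\in S_X$ and $t>0$, put
\[
u=x+ty,\qquad w=\tfrac1t(x-ty).
\]
Then $u+tw=2x$ and $u-tw=2ty$, so
\[
\|u+tw\|_p=2^p,\qquad \|u-tw\|_p=2^pt^p.
\]
Hence $\min\{\|u\pm tw\|_p\}=2^p\min\{1,t^p\}$ and $\max\{\|u\pm tw\|_p\}=2^p\max\{1,t^p\}$. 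The case $t=0$ is trivial and is handled separately.

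\emph{Lower bound.} Fix $\varepsilon>0$ and pick $x,y\in S_X$ with $M:=\max\{\|x+ty\|_p,\|x-ty\|_p\}<S_p(t,X)+\varepsilon$. Then $\|u\|_p\le M$ and $\|w\|_p\le t^{-p}M$. Rescale both vectors by the common factor $\lambda=(M\max\{1,t^{-p}\})^{-1/p}$, so that $\lambda u,\lambda w\in B_X$. The $B_X$-lemma then gives
\[
J_p(t,X)\ \ge\ \lambda^p\,2^p\min\{1,t^p\}.
\]
Letting $\varepsilon\to0$ yields $J_p(t,X)S_p(t,X)\ge 2^p\min\{1,t^p\}^2$.

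\emph{Upper bound.} Symmetrically, pick $x,y\in S_X$ with $m:=\min\{\|x\pm ty\|_p\}>J_p(t,X)-\varepsilon$. Then $\|u\|_p\ge m$ and $\|w\|_p\ge t^{-p}m$. Scale by $\mu=(m\min\{1,t^{-p}\})^{-1/p}$, so both norms are $\ge1$, and apply the second lemma. This gives $S_p(t,X)\le \mu^p2^p\max\{1,t^p\}$, i.e. $J_p(t,X)S_p(t,X)\le 2^p\max\{1,t^p\}^2$.

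\emph{Main obstacle.} The bounds just obtained carry $\min\{1,t^p\}^2$ and $\max\{1,t^p\}^2$, but the statement asks for the first powers. The loss comes entirely from the common rescaling, which has to absorb the imbalance between $\|u\|_p\le M$ and $\|w\|_p\le t^{-p}M$. I would try two repairs, in this order.

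First, use the earlier Proposition $J_p(t,X)=t^pJ_p(1/t,X)$ and $S_p(t,X)=t^pS_p(1/t,X)$. This reduces everything to $t\le1$, where only one side carries the factor $t^{-p}$. I would then try to balance by choosing the extremal pair for the \emph{other} modulus at the reciprocal parameter. Concretely, I would test whether the substitution $u'=\frac1t(x+ty)$, $w'=x-ty$ for $J_p(1/t,X)$ loses the complementary factor. If it does, multiplying the two resulting estimates and taking square roots should give exactly $2^p\min\{1,t^p\}$ and $2^p\max\{1,t^p\}$.

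Second, if that fails, I would check the case $t=1$, where $\lambda^p=1/M$ and the argument is already sharp, and then check whether $X=l^p$ (with the values from the Example) is consistent with the stated first-power bound. This would confirm whether the bound is even attainable before investing in a finer argument.
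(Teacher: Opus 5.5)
Your computation is correct, and it is in fact the corrected form of the paper's own argument. The paper uses the same test vectors $u=(x_n+ty_n)/a_n$, $v=(x_n-ty_n)/a_n$ (resp.\ divided by $b_n$). However, when it inserts them into the characterization of $S_p(t,X)$ over $\|x\|_p,\|y\|_p\ge 1$ and of $J_p(t,X)$ over $B_X$, it evaluates $\|u\pm v\|_p$ instead of $\|u\pm tv\|_p$. That slip is the only reason no rescaling loss appears there. What its computation really shows is $J_p(t,X)S_p(1,X)\le 2^p\max\{1,t^p\}$ and $J_p(1,X)S_p(t,X)\ge 2^p\min\{1,t^p\}$. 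Replacing $v$ by $v/t$ (your $w$) is the right fix. It costs nothing exactly when $\|v/t\|_p$ stays on the required side of $1$: for the lower bound when $t\ge1$, and for the upper bound when $t\le1$. So what you have actually proved is
\[
2^p\min\{1,t^{2p}\}\le J_p(t,X)S_p(t,X)\le 2^p\max\{1,t^{2p}\},
\]
which agrees with the statement in exactly those two regimes. The gap is the other two halves (the lower bound for $0<t<1$ and the upper bound for $t>1$), and neither of your proposed repairs can close it.

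The upper bound for $t>1$ is false. Take $X=\mathbb{R}^2$ with the Euclidean norm ($p=1$). For $x,y\in S_X$ one has $\|x+ty\|^2+\|x-ty\|^2=2(1+t^2)$, so $J_1(t,X)=S_1(t,X)=\sqrt{1+t^2}$, and the product $1+t^2$ exceeds $2\max\{1,t\}=2t$ for every $t>1$. For $0<p<1$, the $p$-norm $\|x\|_p:=|x|_2^p$ on $\mathbb{R}^2$ gives $(1+t^2)^p>(2t)^p$ in the same way. The bound even fails in every space for large $t$. Indeed $J_p(t,X)\ge(t-1)^p$ (take $y=x$), and $\max\{\|x+ty\|_p,\|x-ty\|_p\}\ge\tfrac12\|2ty\|_p=2^{p-1}t^p$, so the product exceeds $2^pt^p$ once $t>1+2^{1/p}$.

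Your reciprocal-parameter repair is a no-op. The identity $J_p(t,X)S_p(t,X)=t^{2p}J_p(1/t,X)S_p(1/t,X)$ maps your squared bounds onto themselves. It also shows that the stated bounds at $t$ and at $1/t$ together would force $J_p(t,X)S_p(t,X)=2^pt^p$ for all $0<t<1$. That is impossible as $t\to0$, since the product stays $\ge 2^{p-1}(1-t)^p$. The first-power lower bound for $0<t<1$ is established neither by your argument nor by the paper's.

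Finally, the Example is not a usable consistency check. Its value $S_p(t,l^p)=(1+t)^p$ comes from reading an upper estimate for an infimum as a lower one; the pair $2^{-1/p}(e_1\pm e_2)$ gives $S_p(1,l^p)\le 2^{p-1}$. With the Example's values, the theorem would already fail at $t=1$.
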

	
	\begin{proof}
		By the definition of the \(J_{p}(t, X)\) constant, let \(a_{n}=1+\frac{1}{n}\) , then for any \(n \in \mathbb{N}^{*}\) , there exist \(x_{n}\) , \(y_{n} \in S_{X}\) such that
		\begin{equation}\label{eq:thm48-1}
			J_{p}(t, X)-\frac{1}{n} \leq \min \left\{\left\| x_{n}+t y_{n}\right\| _{p},\left\| x_{n}-t y_{n}\right\| _{p}\right\}=a_{n}^{p} \leq J_{p}(t, X) .
		\end{equation}
		
		From this, we know that \(\left\|\frac{x_{n}+t y_{n}}{a_{n}}\right\|_{p} \geq1\) and \(\left\|\frac{x_{n}-t y_{n}}{a_{n}}\right\|_{p} \geq1\) , thus, by Lemma 4.7, we have
		\begin{equation}\label{eq:thm48-2}
			\begin{array} {rl}{S_{p}(t,X)}&{=\inf\{ \max\left\{ \| x+ty\| _{p},\| x-ty\| _{p}\} :x,y\in X,\| x\| _{p},\| y\| _{p}\geq 1\right\} }\\ &{\leq \max\left\{ \left\| \frac {x_{n}+ty_{n}}{a_{n}}+\frac {x_{n}-ty_{n}}{a_{n}}\right\| _{p},\left\| \frac {x_{n}+ty_{n}}{a_{n}}-\frac {x_{n}-ty_{n}}{a_{n}}\right\| _{p}\right\} }\\ &{=\frac {\max\{ 1,t^{p}\} 2^{p}}{a_{n}^{p}}.}\end{array}
		\end{equation}
		
		Combine \eqref{eq:thm48-1} and \eqref{eq:thm48-2}, we obtain that \(J_{p}(t, X) S_{p}(t, X) \leq \max \{1, t^{p}\} 2^{p}\).
		
		Furthermore, by the definition of the \(S_{p}(t, X)\) constant, for any \(n \in \mathbb{N}^{*}\) , there exist \(x_{n}, y_{n} \in S_{X}\) such that
		\begin{equation}\label{eq:thm48-3}
			S_{p}(t,X)\leq \max\left\{ \| x_{n}+ty_{n}\| _{p},\| x_{n}-ty_{n}\| _{p}\right\} =b_{n}^{p}<S_{p}(t,X)+\frac {1}{n}.
		\end{equation}
		
		From this, we know that \(\left\|\frac{x_{n}+t y_{n}}{b_{n}}\right\|_{p} \leq1\) and \(\left\|\frac{x_{n}-t y_{n}}{b_{n}}\right\|_{p} \leq1\) , thus, by Lemma 4.6, we have
		\begin{equation}\label{eq:thm48-4}
			\begin{array} {rl}
				J_{p}(t,X)&=\sup\left\{ \min\left\{ \| x+ty\| _{p},\| x-ty\| _{p}\right\} :x,y\in B_{X}\right\} \\
				&\geq \min\left\{ \left\| \frac {x_{n}+ty_{n}}{b_{n}}+\frac {x_{n}-ty_{n}}{b_{n}}\right\| _{p},\left\| \frac {x_{n}+t y_{n}}{b_{n}}-\frac {x_{n}-ty_{n}}{b_{n}}\right\| _{p}\right\} \\
				&=\frac {\min\{ 1,t^{p}\} 2^{p}}{b_{n}^{p}}.
			\end{array}
		\end{equation}
		
		Combine \eqref{eq:thm48-3} and \eqref{eq:thm48-4}, it follows that \(J_{p}(t, X) S_{p}(t, X) \geq \min \{1, t^{p}\} 2^{p}\) , as desired.
	\end{proof}
	
	\begin{theorem}
		Let \((X,\|\cdot\|_{p})\) be a complete  $p$-normed  space .Then
		\begin{enumerate}
			\item[(i)] \(\min\{ t^{p},1\} J_{p}(X)-|1-t|^{p}\leq J_{p}(t,X)\leq \max\{ 1,t^{p}\} J_{p}(X)+|1-t|^{p}.\)
			\item[(ii)] \(\min\{ t^{p},1\} S_{p}(X)-|1-t|^{p}\leq S_{p}(t,X)\leq \max\{ 1,t^{p}\} S_{p}(X)+|1-t|^{p}.\)
		\end{enumerate}
	\end{theorem}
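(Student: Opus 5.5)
The plan is to compare $\|x\pm ty\|_p$ with $\|x\pm y\|_p$ for each fixed pair $x,y\in S_X$, using the $p$-triangle inequality and $p$-homogeneity. Then we pass to the supremum (for $J_p$) or infimum (for $S_p$); since both bounds are pointwise, they survive $\min$/$\max$ and sup/inf. For the outer inequalities we only need the decomposition
\[
x\pm ty = (x\pm y) \mp (1-t)y .
\]
By Definition \ref{def:p-norm}(ii)–(iii) this gives
\[
\|x\pm y\|_p-|1-t|^p\le \|x\pm ty\|_p\le \|x\pm y\|_p+|1-t|^p ,
\]
because $\|(1-t)y\|_p=|1-t|^p$. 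Taking $\min$ over the two signs preserves these inequalities, since $u\mapsto u\pm c$ is monotone. So for every $x,y\in S_X$,
\[
\min\{\|x+ty\|_p,\|x-ty\|_p\}\le \min\{\|x+y\|_p,\|x-y\|_p\}+|1-t|^p\le J_p(X)+|1-t|^p .
\]
Taking the supremum gives $J_p(t,X)\le J_p(X)+|1-t|^p$. In the same way $J_p(t,X)\ge J_p(X)-|1-t|^p$. The $\max$ and infimum versions give the analogous two-sided bounds for $S_p(t,X)$ in terms of $S_p(X)$.

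It remains to insert the factors $\min\{t^p,1\}$ and $\max\{1,t^p\}$. Since $J_p(X)$ and $S_p(X)$ are nonnegative, we have
\[
\min\{t^p,1\}J_p(X)-|1-t|^p\le J_p(X)-|1-t|^p,\qquad J_p(X)+|1-t|^p\le \max\{1,t^p\}J_p(X)+|1-t|^p .
\]
The same holds with $S_p(X)$ in place of $J_p(X)$. So the stated inequalities (i) and (ii) follow from the sharper ones obtained above.

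If one wants the scalar factors to arise naturally, I would split into the cases $t\le1$ and $t\ge1$. For $t\ge 1$ one writes $x\pm ty=t(y\pm \tfrac{1}{t}x)$ after swapping roles, which yields $t^pJ_p(X)$-type terms, and uses the earlier proposition $J_p(t,X)=t^pJ_p(1/t,X)$. I expect this to be unnecessary given the monotonicity remark. The main point to check carefully is that the perturbation term is exactly $|1-t|^p$ and not $|1-t|$. This relies on $p$-homogeneity $\|\lambda y\|_p=|\lambda|^p\|y\|_p$ with $\|y\|_p=1$. The other point is that sup and inf commute correctly with the pointwise shift by the constant $|1-t|^p$, which is immediate.
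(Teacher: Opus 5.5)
Your proof is correct. It is a streamlined version of the paper's argument rather than a genuinely new one. The paper uses two decompositions on each side. The first is your perturbation $x\pm ty=(x\pm y)\mp(1-t)y$, giving the bounds $J_p(X)\mp|1-t|^p$. The second is $x\pm ty=t(x\pm y)+(1-t)x$ together with its rearrangement $t(x\pm y)=(x\pm ty)-(1-t)x$, giving $t^pJ_p(X)\mp|1-t|^p$. The paper then merges the two families to produce the factors $\min\{t^p,1\}$ and $\max\{1,t^p\}$.

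You observe instead that the first family alone already implies the statement, since $\min\{t^p,1\}\le 1\le\max\{1,t^p\}$ and $J_p(X),S_p(X)\ge 0$. This shows the coefficients in the statement are inessential. What is really being proved is $|J_p(t,X)-J_p(X)|\le|1-t|^p$ and $|S_p(t,X)-S_p(X)|\le|1-t|^p$. You also treat (ii) explicitly, where the paper only says ``similarly''.

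The paper's second decomposition could buy more if the two families were merged optimally. It would give the sharper estimate $\max\{1,t^p\}J_p(X)-|1-t|^p\le J_p(t,X)\le\min\{1,t^p\}J_p(X)+|1-t|^p$, and the same for $S_p$, which your single decomposition does not reach. The paper, however, records only the weaker form with $\min$ and $\max$ placed as in the statement, and your argument recovers that form in full.
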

	
	\begin{proof}
		(i)First, for any \(x, y \in S_{X}\) and \(t>0\) , we have
		\[
		\begin{aligned}
			t^{p} \min \left\{\| x+y\| _{p},\| x-y\| _{p}\right\} & = \min \left\{\| t x+t y\| _{p},\| t x-t y\| _{p}\right\} \\
			& \leq \min \left\{|1-t|^{p}+\| x+t y\| _{p},|1-t|^{p}+\| x-t y\| _{p}\right\} \\
			& =|1-t|^{p}+ \min \left\{\| x+t y\| _{p},\| x-t y\| _{p}\right\} \\
			& \leq|1-t|^{p}+J_{p}(t, X),
		\end{aligned}
		\]
		which means that
		\begin{equation}\label{eq:thm49-1}
			t^{p} J_{p}(X)-|1-t|^{p} \leq J_{p}(t, X) .
		\end{equation}
		
		Furthermore, for any \(x, y \in S_{X}\) and \(t>0\) , we have
		\[
		\begin{array} {rl}{\min\{ \| x+y\| _{p},\| x-y\| _{p}\} }&{\leq \min\{ |1-t|^{p}+\| x+ty\| _{p},|1-t|^{p}+\| x-ty\| _{p}\} }\\ &{=|1-t|^{p}+\min\{ \| x+ty\| _{p},\| x-ty\| _{p}\} }\\ &{\leq |1-t|^{p}+J_{p}(t,X),}\end{array}
		\]
		which means that
		\begin{equation}\label{eq:thm49-3}
			J_{p}(X)-|1-t|^{p} \leq J_{p}(t, X) .
		\end{equation}
		
		Combine the above \eqref{eq:thm49-1} and \eqref{eq:thm49-3}, we get that
		\[
		J_{p}(t,X)\geq \min\{ t^{p},1\} J_{p}(X)-|1-t|^{p}.
		\]
		
		On the other hand, for any \(x, y \in S_{X}\) and \(t>0\) , we have
		\begin{align*}
			&\min\{ \| x+ty\| _{p},\| x-ty\| _{p}\} \\
			=& \min\{ \| x+y+(t-1)y\| _{p},\| x-y+(1-t)y\| _{p}\} \\
			\leq & \min\{ \| x+y\| _{p},\| x-y\| _{p}\} +|1-t|^{p}
		\end{align*}
		and
		\begin{align*}
			&\min\{ \| x+ty\| _{p},\| x-ty\| _{p}\} \\
			=& \min\{ \| tx+ty+(1-t)x\| _{p},\| tx-ty+(1-t)x\| _{p}\} \\
			\leq & t^{p}\min\{ \| x+y\| _{p},\| x-y\| _{p}\} +|1-t|^{p}.
		\end{align*}
		
		This implies that \(J_{p}(t, X) \leq \max \{1, t^{p}\} J_{p}(X)+|1-t|^{p}\).
		
		(ii)Similarly, we can easily draw this conclusion in the same way as (i).
	\end{proof}

The following theorem adopts the proof method from \cite{Kato2001} and extends the relevant results to complete $p$-normed spaces ($0 < p \leq 1$). The proof retains the original framework and makes slight modifications adapted to the definition of $p$-normed spaces($0 < p \leq 1$).
\begin{theorem}
	Let $(X,\norm{\cdot}_p)$ be a complete $p$-normed space. Then
	\begin{equation}\label{eq:J-CNJ-inequality}
		\frac{1}{2}J_p(X)^2\leq C_{\mathrm{NJ},p}(X) \leq \frac{J_p(X)^2}{(J_p(X)-1)^2+1}.
	\end{equation}
\end{theorem}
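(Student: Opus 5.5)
The plan is to prove the two inequalities in \eqref{eq:J-CNJ-inequality} separately. Both estimates work directly with the definitions of $J_p(X)$ and $C_{\mathrm{NJ},p}(X)$, together with the reduction
\[
C_{\mathrm{NJ},p}(X)=\sup_{\eta\in[0,1]}\frac{\gamma_p(\eta,X)}{1+\eta^{2p}}
\]
already used in the proof of Theorem \ref{thm:identity-L-CNJ}.

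For the lower bound, I fix $\varepsilon>0$ and choose $x,y\in S_X$ with $\min\{\norm{x+y}_p,\norm{x-y}_p\}>J_p(X)-\varepsilon$. Since $\norm{x}_p=\norm{y}_p=1$, the quotient in \eqref{eq:def-CNJp} is
\[
\frac{\norm{x+y}_p^2+\norm{x-y}_p^2}{4}\ \ge\ \frac{2(J_p(X)-\varepsilon)^2}{4}=\frac{(J_p(X)-\varepsilon)^2}{2}.
\]
Letting $\varepsilon\to0$ gives $\frac12 J_p(X)^2\le C_{\mathrm{NJ},p}(X)$.

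For the upper bound, I follow the scheme of \cite{Kato2001}. By the $\gamma_p$-representation it suffices to bound $\frac{\norm{x+\eta y}_p^2+\norm{x-\eta y}_p^2}{2(1+\eta^{2p})}$ for $x,y\in S_X$ and $\eta\in[0,1]$. Replacing $y$ by $-y$ if necessary, I may assume $\norm{x+y}_p\le J_p(X)$. Writing $x+\eta y=\eta(x+y)+(1-\eta)x$ and applying the triangle inequality with $p$-homogeneity gives
\[
\norm{x+\eta y}_p\le \eta^pJ_p(X)+(1-\eta)^p .
\]
Trivially $\norm{x-\eta y}_p\le 1+\eta^p$, and I would also keep the alternative bound $(1+\eta)^p$ available. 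With $J=J_p(X)$, this yields
\[
C_{\mathrm{NJ},p}(X)\le\sup_{\eta\in[0,1]}\frac{\bigl(\eta^pJ+(1-\eta)^p\bigr)^2+(1+\eta^p)^2}{2(1+\eta^{2p})}.
\]
The remaining task is to show that this supremum is at most $\frac{J^2}{(J-1)^2+1}$. To do so I would substitute $s=\eta^p\in[0,1]$ and estimate $(1-\eta)^p$ in terms of $s$, so that the numerator becomes a quadratic form in $(1,s)$ and the problem becomes a two-variable Rayleigh-quotient maximization, as in the Banach case.

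I expect this last step to be the main obstacle. When $p=1$ the estimate $(1-\eta)+\eta J$ is linear, and the maximization is explicit: the maximum is attained at $\eta=1/(J-1)$ when $J\ge 2$, which recovers the stated constant. For $p<1$, however, $(1-\eta)^p$ does not satisfy $(1-\eta)^p\le 1-s$, because $(1-\eta)^p+\eta^p\ge1$. So the quadratic-form reduction is not immediate.

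My first attempt would be to handle the two regimes separately. In the regime $\eta^p\le 1/(J-1)$, I would use $\norm{x+\eta y}_p\le 1+\eta^p$ together with $\norm{x+y}_p\le J$ through a convex-combination argument in $s$. If that fails, I would restrict to the range of $\eta$ where the supremum is attained and apply Cauchy--Schwarz to the vector $(1,s)$ against $(1,J-1)$ and $(1,1)$, so as to recover the denominator $(J-1)^2+1$.
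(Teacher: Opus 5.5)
Your lower bound is correct and is the paper's argument. The upper bound has a genuine gap. The problem is not only that the final optimization is unfinished: the one-variable inequality you reduce to is false when $p<1$. The splitting $x+\eta y=\eta(x+y)+(1-\eta)x$ costs the factor $\eta^p+(1-\eta)^p$, which reaches $2^{1-p}$, so the estimate $\norm{x+\eta y}_p\le\eta^pJ_p(X)+(1-\eta)^p$ is far too lossy. Take $J_p(X)=2$ (as in $l^{p}$), $p=\tfrac12$ and $\eta=\tfrac12$. Your displayed quotient then equals $\frac{9/2+(1+2^{-1/2})^2}{3}\approx 2.47$, whereas $\frac{J^2}{(J-1)^2+1}=2$. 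This value even exceeds the trivial bound $C_{\mathrm{NJ},p}(X)\le 2$, so no convex-combination or Cauchy--Schwarz treatment of that supremum can produce the stated constant. The ``alternative bound'' $(1+\eta)^p$ for $\norm{x-\eta y}_p$ is also not valid in a $p$-normed space: in $l^{p}$ one has $\norm{e_1-\eta e_2}_p=1+\eta^p>(1+\eta)^p$ for $\eta>0$, $p<1$.

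The missing idea is to bound the smaller of the two terms directly by $J_p(X)$, instead of interpolating between $x$ and $x+y$. Put $q=\norm{\eta y}_p=\eta^p\le 1$. Then $\max\{\norm{x+\eta y}_p,\norm{x-\eta y}_p\}\le 1+q$. Since $x,\eta y\in B_X$, the unit-ball description of $J_p(X)$ (Lemma 4.6 of the paper with $t=1$, used implicitly in the paper's Case 1) gives $\min\{\norm{x+\eta y}_p,\norm{x-\eta y}_p\}\le J_p(X)$. Hence your quotient is at most $\min\{f(q),g(q)\}$, where
\[
f(q)=\frac{(1+q)^2+J_p(X)^2}{2(1+q^2)},\qquad g(q)=\frac{(1+q)^2}{1+q^2}.
\]
Splitting at $q=J_p(X)-1$ and using the monotonicity of $f$ and $g$ as in \cite{Kato2001} produces $\frac{J_p(X)^2}{(J_p(X)-1)^2+1}$. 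This is exactly the paper's proof; it normalizes $\norm{x}_p=1$, $\norm{y}_p=q\le 1$ instead of passing through $\gamma_p$. Because $q$ is the value of the $p$-norm itself, no homogeneity loss enters.

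One caution if you adopt this route. The step $f(q)\le f(J_p(X)-1)$ for $q\ge J_p(X)-1$ needs $J_p(X)-1$ to lie beyond the critical point $q_0$ of $f$. Since $1-(J-1)^2-(J-1)J^2=J(2-J^2)$, this is equivalent to $J_p(X)\ge\sqrt2$. That holds automatically in Banach spaces, but not for $p<1$, where only $J_p(X)\ge 2^{p-1}$ is available. So that range is not covered by the argument as written.
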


\begin{proof}
	We first prove the left inequality. For any $x,y\in S_X$, we have
	\begin{align*}
		\minop\left\{ \norm{x+y}_p^2, \norm{x-y}_p^2 \right\}
		&\leq \frac{\norm{x+y}_p^2 + \norm{x-y}_p^2}{2} \\
		&= 2 \frac{\norm{x+y}_p^2 + \norm{x-y}_p^2}{2\left( \norm{x}_p^2 + \norm{y}_p^2 \right)} \\
		&\leq 2 C_{\mathrm{NJ},p}(X),
	\end{align*}

	which implies the left inequality of \eqref{eq:J-CNJ-inequality} . To prove the right inequality we
	first observe that
	
	\[
	C_{\mathrm{NJ},p}(X) = \supop\left\{ \frac{\norm{x+y}_p^2 + \norm{x-y}_p^2}{2\left( \norm{x}_p^2 + \norm{y}_p^2 \right)} : x,y\in X,\ \norm{x}_p=1,\ \norm{y}_p\leq1 \right\}.
	\]
	In fact, if $0\neq \norm{x}_p \geq \norm{y}_p$ (for $\norm{x}_p \leq \norm{y}_p \neq 0$ the proof is similar by symmetry), then
	\[
	\norm{x\pm y}_p = \left\| \norm{x}_p^{\frac{1}{p}} \cdot \frac{x}{\norm{x}_p^{\frac{1}{p}}} \pm \norm{x}_p^{\frac{1}{p}} \cdot \frac{y}{\norm{x}_p^{\frac{1}{p}}} \right\|_p = \norm{x}_p \left\| \frac{x}{\norm{x}_p^{\frac{1}{p}}} \pm \frac{y}{\norm{x}_p^{\frac{1}{p}}} \right\|_p,
	\]
	and
	\[
	\norm{y}_p = \left\| \norm{x}_p^{\frac{1}{p}} \cdot \frac{y}{\norm{x}_p^{\frac{1}{p}}} \right\|_p = \norm{x}_p \left\| \frac{y}{\norm{x}_p^{\frac{1}{p}}} \right\|_p,
	\]
	so $\left\| \frac{y}{\norm{x}_p^{\frac{1}{p}}} \right\|_p = \frac{\norm{y}_p}{\norm{x}_p} \leq 1$. Substituting into the ratio gives
	\begin{align*}
		\frac{\norm{x+y}_p^2 + \norm{x-y}_p^2}{2\left( \norm{x}_p^2 + \norm{y}_p^2 \right)}
		&= \frac{\norm{x}_p^2 \left( \left\| \frac{x}{\norm{x}_p^{\frac{1}{p}}} + \frac{y}{\norm{x}_p^{\frac{1}{p}}} \right\|_p^2 + \left\| \frac{x}{\norm{x}_p^{\frac{1}{p}}} - \frac{y}{\norm{x}_p^{\frac{1}{p}}} \right\|_p^2 \right)}{2 \norm{x}_p^2 \left( 1 + \left\| \frac{y}{\norm{x}_p^{\frac{1}{p}}} \right\|_p^2 \right)} \\
		&= \frac{\left\| \frac{x}{\norm{x}_p^{\frac{1}{p}}} + \frac{y}{\norm{x}_p^{\frac{1}{p}}} \right\|_p^2 + \left\| \frac{x}{\norm{x}_p^{\frac{1}{p}}} - \frac{y}{\norm{x}_p^{\frac{1}{p}}} \right\|_p^2}{2\left( 1 + \left\| \frac{y}{\norm{x}_p^{\frac{1}{p}}} \right\|_p^2 \right)},
	\end{align*}
	which shows that the supremum in the definition of $C_{\mathrm{NJ},p}(X)$ can be taken just over $x,y \in X$ such that $\norm{x}_p = 1$ and $\norm{y}_p \leq 1$.
	
	To obtain the right estimate, we consider two cases (of course for $\norm{x}_p = 1$ and $\norm{y}_p \leq 1$):
	
	\textbf{Case 1:} $\norm{y}_p = q \geq J_p(X) - 1$. Then
	\begin{align*}
		A &:= \frac{\norm{x+y}_p^2+\norm{x-y}_p^2}{2\left(\norm{x}_p^2+\norm{y}_p^2\right)} \\
		&\leq \frac{\left(\norm{x}_p+\norm{y}_p\right)^2 + \left(\min\left\{ \norm{x+y}_p, \norm{x-y}_p \right\}\right)^2}{2\left(\norm{x}_p^2+\norm{y}_p^2\right)} \\
		&\leq \frac{(1+q)^2 + J_p(X)^2}{2(1+q^2)} =: f(q).
	\end{align*}
	Since
	\[
	f'(q)=\frac{1 - qJ_p(X)^2 - q^2}{(1+q^2)^2},
	\]
	it follows that $f$ is increasing on $(0,q_0)$ and decreasing on $(q_0,1)$, where
	\[
	q_0=\frac{-J_p(X)^2+\sqrt{J_p(X)^4+4}}{2}.
	\]
	Now $J_p(X)-1 \geq q_0$ and so
	\[
	A \leq f(q) \leq f(J_p(X)-1) = \frac{J_p(X)^2}{(J_p(X)-1)^2+1}.
	\]
	
	\textbf{Case 2:} $\norm{y}_p = q \leq J_p(X) - 1$. Then
	\[
	A \leq \frac{2\left(\norm{x}_p+\norm{y}_p\right)^2}{2\left(\norm{x}_p^2+\norm{y}_p^2\right)} = \frac{(1+q)^2}{1+q^2} =: g(q).
	\]
	Since $g'(q)=\frac{2(1-q^2)}{(1+q^2)^2}>0$, it follows that $g$ is increasing on $(0,1]$ and so
	\[
	A \leq g(q) \leq g(J_p(X)-1) = \frac{J_p(X)^2}{(J_p(X)-1)^2+1}.
	\]
	
	Thus, in both cases,
	\[
	A \leq \frac{J_p(X)^2}{(J_p(X)-1)^2+1}.
	\]
	This completes the proof.
\end{proof}

	To prepare for the proof of the main theorem, we first present a key auxiliary lemma. The subsequent theorem follows the research method of \cite{Yang2017}.
	
\begin{lemma}\label{111}
	Let $0 < p \leq 1$ and $r \geq 2$, and let $J=J_p(X)\in[J_2,2]$ satisfy both inequalities $\left(\frac{J}{2}\right)^r + 1 \leq J$ and $1+\frac{2+J(J^r+2^r-1)}{2+J}\leq J$.
	Here $J_1$ is the minimal solution of the first inequality, $J_2$ is the minimal solution of the second inequality with $1<J_2<2$, and $J_1\leq J_2$.
	
	Then for any $t \in [0,1]$, we have
	\[
	\phi(t) = \frac{\left[J t^p + (1-t)^p\right]^r + \left[2 t^p + (1-t)^p\right]^r}{2^{r-1}\left(1 + t^{pr}\right)} \leq J.
	\]
\end{lemma}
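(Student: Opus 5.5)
The plan is to treat the statement as a one-variable calculus inequality. Throughout I regard $J\in[J_2,2]$ and $r\ge 2$ as fixed and write $u=t^p$, $v=(1-t)^p$, so that $u,v\in[0,1]$. I would first check the two endpoints, since they show where each hypothesis enters.
\begin{itemize}
\item At $t=0$ we have $\phi(0)=2/2^{r-1}=2^{2-r}\le 1<J$.
\item At $t=1$ we have $\phi(1)=(J^r+2^r)/2^r=(J/2)^r+1$, and this is $\le J$ exactly by the first hypothesis.
\end{itemize}
So the first inequality is what controls the right endpoint, and the second inequality should be the one that controls the interior.

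For the interior I would split $[0,1]$ into $[0,\tfrac12]$ and $[\tfrac12,1]$.

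On $[0,\tfrac12]$ the denominator satisfies $1+u^r\ge 1$. For the numerator I would use $J\le 2$, giving
\[
(Ju+v)^r+(2u+v)^r\le 2(2u+v)^r .
\]
I would then bound $2u+v=2t^p+(1-t)^p$ via concavity of $s\mapsto s^p$. Its maximum over $[0,1]$ is attained where $2t^{p-1}=(1-t)^{p-1}$, so the quantity stays controlled. The aim is an estimate of the form $\phi(t)\le 2^{2-r}(2u+v)^r/(1+u^r)$. I expect this to be comfortably below $1+\frac{2+J(J^r+2^r-1)}{2+J}-1\le J$, or simply $\le 1$, for small $t$. I would check this by comparing with the value at $t=\tfrac12$.

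On $[\tfrac12,1]$ I would write $v\le 1-u$ when $p=1$. For general $p$, the relation between $u$ and $v$ is weaker, so I would use $v\le u$ to reduce to a function of $u$ alone. The key step is then to bound $\phi$ by a function of the form
\[
\psi(u)=\frac{(J u+v)^r+(2u+v)^r}{2^{r-1}(1+u^r)}
\]
and show that its maximum is either at $u=1$, which falls under the first hypothesis, or at a critical point. At the critical point I would use the first-order condition, obtained by differentiating with respect to $t$, to eliminate $u^r$ from the denominator. The aim is for the resulting upper bound to be exactly the expression $1+\frac{2+J(J^r+2^r-1)}{2+J}$, which the second hypothesis makes $\le J$. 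The weight $\frac{J}{2+J}$ strongly suggests a convex-combination or mediant estimate, namely
\[
\frac{a+b}{c+d}\le\max\Bigl\{\frac ac,\frac bd\Bigr\}
\]
with weights proportional to $2$ and $J$. So I would first try splitting the numerator and denominator into pieces with coefficients $2$ and $J$ and applying the mediant inequality.

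The main obstacle is this interior estimate on $[\tfrac12,1]$. Because $p<1$, the numerator $(Ju+v)^r+(2u+v)^r$ is not a polynomial in one variable, so a direct monotonicity argument for $\phi$ is awkward. I would first try to prove that $\phi$ is quasi-convex in $u$, so that its maximum is at an endpoint, by checking the sign of the derivative after clearing denominators. Only if that fails would I fall back on the mediant estimate tailored to match the second hypothesis.
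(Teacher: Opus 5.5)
Your endpoint checks are correct. The gap lies under every interior estimate you propose: the second hypothesis can never hold. For $J\ge 1$ and $r\ge 2$ one has $J^r+2^r-1\ge 4$, so
\[
1+\frac{2+J(J^r+2^r-1)}{2+J}\ \ge\ 1+\frac{2+4J}{2+J}\ \ge\ 3\ >\ 2\ \ge\ J .
\]
After clearing denominators the hypothesis reads $J^{r+1}-J^2+(2^r-2)J+4\le 0$, which fails for every $J\ge 0$. So no $J_2\in(1,2)$ exists, the lemma is true only vacuously, and the one valid proof is to observe exactly this.

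Your interior plan is to reach the bound $1+\frac{2+J(J^r+2^r-1)}{2+J}$ and then invoke the hypothesis. This matches the paper's first case. There, stationary points with $t_0^{pr}\le J/2$ are handled by $(1-t)^p\le 1$ and $(a+b)^r\le 2^{r-1}(a^r+b^r)$, giving $\phi\le\frac{2+(J^r+2^r)s}{1+s}$ with $s=t^{pr}\le J/2$. This is increasing in $s$ and equals that expression at $s=J/2$. That is where the weights $2$ and $J$ actually come from, not a mediant inequality. Since the expression is always at least $3$, this step, in the paper and in your plan, is closed only by an impossible hypothesis and carries no actual estimate.

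Without that hypothesis the inequality is false in general when $p<1$, so your concrete steps have to break, and they do.
\begin{itemize}
\item \emph{The bound on $[0,\tfrac12]$.} For $p=\tfrac12$, $r=2$, $t=\tfrac12$ one has $u=v=2^{-1/2}$ and $\phi(\tfrac12)=\frac{(J+1)^2+9}{6}$. This exceeds $J$ for every real $J$, because $J^2-4J+10>0$. In particular your bound $2^{2-r}(2u+v)^r/(1+u^r)$ equals $3$ there, instead of staying below $1$.
\item \emph{The reduction on $[\tfrac12,1]$.} Replacing $v$ by $u$ gives $\frac{(J+1)^r+3^r}{2^{r-1}}\cdot\frac{u^r}{1+u^r}$. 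At $u=1$ this is at least $(3/2)^r\ge\tfrac94>J$. Without that replacement, your $\psi$ is just $\phi$ again.
\item \emph{Quasi-convexity.} This route fails for every $p<1$. Because $(1-t)^p$ has infinite slope at $t=1$,
\[
\phi(1-h)=\phi(1)+\frac{r\,(J^{r-1}+2^{r-1})}{2^{r}}\,h^{p}+o(h^{p})>\phi(1)\ge\phi(0)
\]
for all small $h>0$. So the maximum of $\phi$ is never at an endpoint. With $J=2$, which satisfies the first hypothesis, this already gives $\phi>J$ near $t=1$.
\end{itemize}
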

	
	\begin{proof}
		We first verify the inequality at the endpoints of $[0,1]$.
		
		When $t = 0$,
		\[
		\phi(0) = \frac{1^r + 1^r}{2^{r-1}} = 2^{2-r} \leq \frac{1}{2} < J,
		\]
		since $J \geq J_2 > 1 > \frac{1}{2}$.
		
		When $t = 1$,
		\[
		\phi(1) = \frac{J^r + 2^r}{2^r} = \left(\frac{J}{2}\right)^r + 1.
		\]
		Since $J \in [J_2, 2]$ and $J_2 \geq J_1$, we have $\left(\frac{J}{2}\right)^r + 1 \leq J$, which implies $\phi(1) \leq J$.
		
		Next, let $t_0 \in (0,1)$ be a stationary point of $\phi(t)$. We consider the following two cases:
		
		\medskip
		\noindent \textbf{Case 1:} $t_0^{pr} \leq \frac{J}{2}$
		\begin{align*}
			\phi(t_0) &\leq \frac{\left[1 + J t_0^p\right]^r + \left[1 + 2 t_0^p\right]^r}{2^{r-1}(1 + t_0^{pr})} \\
			&\leq \frac{2^{r-1}(1 + J^r t_0^{pr}) + 2^{r-1}(1 + 2^r t_0^{pr})}{2^{r-1}(1 + t_0^{pr})} \\
			&\leq 1 + \frac{2 + J(J^r + 2^r - 1)}{2 + J}.
		\end{align*}
		By the definition of $J_2$, for all $J \in [J_2, 2]$, we have
		\[
		1 + \frac{2 + J(J^r + 2^r - 1)}{2 + J} \leq J,
		\]
		hence $\phi(t_0) \leq J$.
		
		\medskip
		\noindent \textbf{Case 2:} $t_0^{pr} > \frac{J}{2}$
		\begin{align*}
			\phi(t_0) &\leq \frac{J t_0^{p-1} \cdot J^{r-1} + 2 t_0^{p-1} \cdot 2^{r-1}}{2^{r-1} r t_0^{pr-1}} \\
			&< \frac{J^r + 2^r}{2^{r-1} r \left(\frac{J}{2}\right)^{\frac{r-1}{r}}}.
		\end{align*}
		For $J \in [J_2, 2]$ and $r \geq 3$, direct calculation gives
		\[
		\frac{J^r + 2^r}{2^{r-1} r \left(\frac{J}{2}\right)^{\frac{r-1}{r}}} \leq J,
		\]
		thus $\phi(t_0) \leq J$.
		
		\medskip
		In summary, $\phi(t) \leq J$ holds for all $t \in [0,1]$.
	\end{proof}

	We now use the preceding auxiliary lemmas to establish the upper bound of the generalized $r$-von Neumann-Jordan constant in terms of the James constant for complete $p$-normed spaces($0<p\leq 1$).

\begin{theorem}
	Let $(X,\|\cdot\|_p)$ be a complete $p$-normed space $(0<p\leq 1)$ with $J_p(X) \in [J_2, 2]$, and let $r \geq 2$. Then the following inequality holds for the generalized $r$-von Neumann-Jordan constant:
	\[
	C_{\mathrm{NJ},p}^{(r)}(X) \leq J_p(X).
	\]
\end{theorem}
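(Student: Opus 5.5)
The plan follows the method of \cite{Yang2017}: reduce the ratio in the definition of $C_{\mathrm{NJ},p}^{(r)}(X)$ to a one-parameter expression and then apply Lemma \ref{111}.

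\textbf{Normalization.} By the $p$-homogeneity argument used in the proof of the previous theorem, the supremum defining $C_{\mathrm{NJ},p}^{(r)}(X)$ may be taken over $\norm{x}_p=1$ and $\norm{y}_p\leq 1$. Indeed, dividing both vectors by $\norm{x}_p^{1/p}$ multiplies numerator and denominator by the same factor $\norm{x}_p^{-r}$. By the symmetry $x\pm y\leftrightarrow y\pm x$, and after replacing $y$ by $-y$ if necessary, we can also assume
\[
\norm{x-y}_p\leq\norm{x+y}_p.
\]
Write $y=t\,u$ with $u\in S_X$ and $t\in[0,1]$, so that $\norm{y}_p=t^p$ and the denominator is $2^{r-1}(1+t^{pr})$.

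\textbf{Bounding the two terms.} The aim is to dominate the numerator by the numerator of $\phi(t)$ from Lemma \ref{111}. Decompose
\[
x\pm tu=t(x\pm u)+(1-t)x .
\]
The triangle inequality and $p$-homogeneity then give
\[
\norm{x\pm tu}_p\leq t^p\norm{x\pm u}_p+(1-t)^p .
\]
Since $x,u\in S_X$, we have $\min\{\norm{x+u}_p,\norm{x-u}_p\}\leq J_p(X)=J$ and $\max\{\norm{x+u}_p,\norm{x-u}_p\}\leq 2$. The two vectors $x\pm tu$ thus have norms bounded by $Jt^p+(1-t)^p$ and $2t^p+(1-t)^p$ in some order. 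Hence
\[
\frac{\norm{x+y}_p^r+\norm{x-y}_p^r}{2^{r-1}(\norm{x}_p^r+\norm{y}_p^r)}\leq \frac{[Jt^p+(1-t)^p]^r+[2t^p+(1-t)^p]^r}{2^{r-1}(1+t^{pr})}=\phi(t).
\]
The assignment of the bounds must be matched to the larger and smaller of $\norm{x\pm u}_p$; the sum is symmetric, so the order is irrelevant.

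\textbf{Conclusion.} Since $J_p(X)\in[J_2,2]$, Lemma \ref{111} applies and yields $\phi(t)\leq J_p(X)$ for every $t\in[0,1]$. Taking the supremum over admissible $x,y$ gives $C_{\mathrm{NJ},p}^{(r)}(X)\leq J_p(X)$. The degenerate cases $y=0$ (that is, $t=0$) and $x=0$ reduce by symmetry to the value $2^{2-r}$, which is at most $1<J$.

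\textbf{Main obstacle.} The real analytic difficulty sits in Lemma \ref{111}, which we are allowed to assume. Within this proof, the delicate point is the normalization step. In a $p$-normed space the scaling $y\mapsto y/\norm{x}_p^{1/p}$ must keep $\norm{y}_p\leq1$ while rewriting $\norm{y}_p$ exactly as $t^p$ with $t\in[0,1]$. One must also check that the estimate using $J_p(X)$ applies to the pair $x,u\in S_X$ itself, not to $x,y$. I would verify these bookkeeping points carefully and otherwise rely entirely on Lemma \ref{111}.
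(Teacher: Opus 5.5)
Your argument is correct relative to Lemma \ref{111}. It is a streamlined version of the paper's proof rather than a reproduction of it.

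\textbf{How the paper argues.} It uses the same decomposition $x\pm ty=t(x\pm y)+(1-t)x$ with $x,y\in S_X$. The reduction to $\|x\|_p=1$, $y=tu$, $u\in S_X$ is left tacit there; you make it explicit. The paper then splits into the cases $\max\{\|x+y\|_p,\|x-y\|_p\}\le J_p(X)$ and $\max\{\|x+y\|_p,\|x-y\|_p\}\ge J_p(X)$.
\begin{itemize}
\item In the first case it analyzes a separate function $\psi_p$. This relies on the claim $t^pJ+(1-t)^p\le J$ for all $t\in[0,1]$, which is false for small $p$: at $t=\frac{1}{2}$ it reads $2^{-p}(J+1)\le J$.
\item In the second case it invokes the modulus of $p$-rotundity and the cited identity $\Delta_X^p(\varepsilon^*)=1-\varepsilon^*/2$ to conclude $\|x+y\|_p\le J_p(X)$.
\end{itemize}

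\textbf{How your route differs.} You get what is needed at once. The smaller of $\|x\pm u\|_p$ is at most $J$ by the definition of $J_p(X)$, and the larger is at most $2$ by the triangle inequality. The paper's first case is then absorbed, because $Jt^p+(1-t)^p\le 2t^p+(1-t)^p$. No rotundity machinery is needed, since the conclusion of the paper's second case follows directly from the definition of $J_p(X)$. Your route is shorter and more elementary, and it sidesteps both questionable steps of the paper. The case split gains nothing, since $\phi$ dominates every bound the paper produces.

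\textbf{A caution about relying entirely on Lemma \ref{111}.} Its second hypothesis $1+\frac{2+J(J^r+2^r-1)}{2+J}\le J$ is equivalent to $J^{r+1}-J^2+(2^r-2)J+4\le 0$, which has no solution $J>0$. So $J_2$ does not exist, and the theorem holds only vacuously, with either proof. This is not an accident of bookkeeping. For fixed $t\in(0,1)$ one has $\phi(t)\to\frac{(J+1)^r+3^r}{2^r}>2\ge J$ as $p\to 0^+$. Hence the majorant $\phi$ cannot deliver a non-vacuous bound $C_{\mathrm{NJ},p}^{(r)}(X)\le J_p(X)$ valid for all $0<p\le 1$.
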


\begin{proof}
	The proof is divided into two mutually exclusive and exhaustive cases.
	
	\medskip
	\noindent \textbf{Case 1.} $\max\left\{\|x+y\|_p,\|x-y\|_p\right\} \leq J_p(X)$.
	
	For any $t \in [0,1]$, decompose the vectors as $x \pm ty = t(x \pm y) + (1-t)x$. We have
	\[
	\begin{aligned}
		\|x \pm ty\|_p &= \left\| t(x \pm y) + (1-t)x \right\|_p \\
		&\leq \|t(x \pm y)\|_p + \|(1-t)x\|_p \\
		&= t^p \|x \pm y\|_p + (1-t)^p \|x\|_p.
	\end{aligned}
	\]
	Since $x \in S_X$, $\|x\|_p = 1$, so
	\[
	\|x \pm ty\|_p \leq t^p \|x \pm y\|_p + (1-t)^p.
	\]
	By the condition of Case 1, $\|x \pm y\|_p \leq J_p(X)$, which gives the uniform upper bound
	\[
	\|x \pm ty\|_p \leq t^p J_p + (1-t)^p.
	\]
	Therefore, the numerator of the original fraction satisfies
	\[
	\|x+ty\|_p^r + \|x-ty\|_p^r \leq 2\left[t^p J_p + (1-t)^p\right]^r.
	\]
	Define the auxiliary function $\psi_p(t)$:
	\[
	\psi_p(t) = \frac{2\left[t^p J_p + (1-t)^r\right]}{2^{r-1}\left(1+t^{pr}\right)} = 2^{2-r} \cdot \frac{\left[t^p J_p + (1-t)^p\right]^r}{1+t^{pr}}.
	\]
	We now verify that the maximum of $\psi_p(t)$ on $[0,1]$ is strictly less than $J_p$.
	
	1. Endpoint $t=0$
	\[
	\psi_p(0) = 2^{2-r}.
	\]
	For $r \geq 2$, $2^{2-r} \leq 1$. Since $J_p(X) \geq J_2 > 1$, we have $\psi_p(0) \leq J_p(X)$.
	
	2. Endpoint $t=1$
	\[
	\psi_p(1) = J_p \cdot \left( \frac{J_p}{2} \right)^{r-1}.
	\]
	Since $J_p < 2$ and $r \geq 2$, we have $\left( \frac{J_p}{2} \right)^{r-1} < 1$, hence $\psi_p(1) < J_p$.
	
	3. When $t_0 \in (0,1)$, we have
	\[
	\psi_p(t_0) = 2^{2-r} \left[t_0^p J_p + (1-t_0)^p\right]^{r-1}.
	\]
	Since $t_0^p J_p + (1-t_0)^p \leq J_p$ for all $t_0 \in [0,1]$, it follows that
	\[
	\psi_p(t_0) \leq 2^{2-r} J_p^{r-1} < J_p.
	\]
	
	In conclusion, $\max\limits_{t \in [0,1]} \psi_p(t) < J_p$.
	
	\[
	\begin{aligned}
		&\frac{\|x+ty\|_p^r + \|x-ty\|_p^r}{2^{r-1}\left(1+t^{pr}\right)} \\
		\leq&\ \frac{\left[t^p J_p + (1-t)^p\right]^r + \left[t^p J_p + (1-t)^p\right]^r}{2^{r-1}\left(1+t^{pr}\right)} \\
		\leq&\ \max\limits_{t \in [0,1]} \psi_p(t) < J_p
	\end{aligned}
	\]
	
	\medskip
	\noindent \textbf{Case 2.} $\max\left\{\|x+y\|_p,\|x-y\|_p\right\} \geq J_p(X)$.
	
	By symmetry, we may assume without loss of generality that $\varepsilon' = \|x-y\|_p \geq J_p(X)$.
	
	Set $\|x-y\|_p = \varepsilon' = 2^{1-p} \varepsilon^p$. Then $\varepsilon = 2^{\frac{p-1}{p}} \varepsilon'^{\frac{1}{p}}$.
	Since $\varepsilon' \geq J_p(X)$, we have
	\[
	\varepsilon = 2^{\frac{p-1}{p}} \varepsilon'^{\frac{1}{p}} \geq 2^{\frac{p-1}{p}} J_p^{\frac{1}{p}} =: \varepsilon^*,
	\]
	where $\varepsilon^*$ is the critical parameter corresponding to the James constant, satisfying $\Delta_X^p(\varepsilon^*) = 1 - \frac{\varepsilon^*}{2}$ \cite{Xiao2025}.
	
	By the monotonicity (non-decreasing) of the modulus of $p$-rotundity, $\Delta_X^p(\varepsilon) \geq \Delta_X^p(\varepsilon^*) = 1 - \frac{\varepsilon^*}{2}$. Substituting into the definition of the modulus of $p$-rotundity
	\[
	1 - \frac{\|x+y\|_p^{\frac{1}{p}}}{2^{\frac{1}{p}}} \geq \Delta_X^p(\varepsilon) \geq 1 - \frac{\varepsilon^*}{2}.
	\]
	Then $\frac{\|x+y\|_p^{\frac{1}{p}}}{2^{\frac{1}{p}}} \leq \frac{\varepsilon^*}{2}$, which implies $\|x+y\|_p^{\frac{1}{p}} \leq 2^{\frac{1}{p}-1} \varepsilon^*$.
	Substituting $\varepsilon^* = 2^{1-\frac{1}{p}} J_p^{\frac{1}{p}}$ into the above inequality yields
	$\|x+y\|_p^{\frac{1}{p}} \leq 2^{\frac{1}{p}-1} \cdot 2^{1-\frac{1}{p}} J_p^{\frac{1}{p}} = J_p^{\frac{1}{p}}$.
	Raise both sides to the $p$-th power to obtain
	\[
	\|x+y\|_p \leq J_p(X).
	\]
	
	Next, we bound the numerator of the original fraction. By the triangle inequality and $p$-homogeneity of complete $p$-normed spaces ($0<p\leq 1$)
	\[
	\|x+ty\|_p \leq t^p \|x+y\|_p + (1-t)^p, \quad \|x-ty\|_p \leq t^p \|x-y\|_p + (1-t)^p.
	\]
	Substitute $\|x+y\|_p \leq J_p$ and $\|x-y\|_p = \varepsilon'$
	\[
	\|x+ty\|_p^r + \|x-ty\|_p^r \leq \left[t^p J_p + (1-t)^p\right]^r + \left[t^p \varepsilon' + (1-t)^p\right]^r =: f_p(\varepsilon').
	\]
	Differentiate $f_p(\varepsilon')$ with respect to $\varepsilon'$
	\[
	f_p'(\varepsilon') = r t^p \left[t^p \varepsilon' + (1-t)^p\right]^{r-1} > 0.
	\]
	Hence, $f_p(\varepsilon')$ is strictly increasing on $[J_p, 2]$, and its maximum is attained at $\varepsilon' = 2$:
	\[
	f_p(2) = \left[t^p J_p + (1-t)^p\right]^r + \left[2 t^p + (1-t)^p\right]^r.
	\]
	
	Then
	\[
	\frac{\|x+ty\|_p^r + \|x-ty\|_p^r}{2^{r-1}\left(1+t^{pr}\right)} \leq \frac{\left[t^p J_p + (1-t)^p\right]^r + \left[2 t^p + (1-t)^p\right]^r}{2^{r-1}\left(1+t^{pr}\right)}\leq J_p(X).
	\]
\end{proof}
	The last inequality follows from \eqref{111}.

	\section*{Acknowledgements}
	Thanks to all the members of the Functional Analysis
	Research team of the College of Mathematics and Statistics of Anqing Normal University for their discussion and correction of the difficulties and errors encountered in
	this paper.

\end{document}